\DeclareMathOperator{\R}{\mathbb{R}}
\DeclareMathOperator{\N}{\mathbb{N}}
\DeclareMathOperator{\F}{\mathcal{F}}
\newcommand{\ba}{{ba(\widetilde{M})}}
\DeclareMathOperator{\Lip}{Lip}
\DeclareMathOperator{\supp}{supp}
\theoremstyle{plain}
    \newtheorem{theorem}{Theorem}[section]
    \newtheorem{proposition}[theorem]{Proposition}
    \newtheorem{lemma}[theorem]{Lemma}
    \newtheorem{corollary}[theorem]{Corollary}
\theoremstyle{definition}
    \newtheorem{example}[theorem]{Example}
    \newtheorem{definition}[theorem]{Definition}
\theoremstyle{remark}
    \newtheorem*{remark}{Remark}
\numberwithin{equation}{section}
\title[LD$2$P and D$2$P in spaces of Lipschitz functions]{The local diameter two property and the diameter two property in spaces of Lipschitz functions}
\author{Rainis Haller}
\address{{\rm(R. Haller)} Institute of Mathematics and Statistics, University of Tartu, Narva mnt 18, 51009, Tartu, Estonia}%
\email{rainis.haller@ut.ee}
\author{Jaan Kristjan Kaasik}
\address{{\rm(J. K. Kaasik)} Institute of Mathematics and Statistics, University of Tartu, Narva mnt 18, 51009, Tartu, Estonia}%
\email{jaan.kristjan.kaasik@ut.ee}
\author{Andre Ostrak}
\address{{\rm(A. Ostrak)} Department of Mathematics, University of  Agder, Postbox 422 4604 Kristiansand, Norway.}%
\email{andre.ostrak@uia.no}%
\subjclass[2020]{Primary 46B04; Secondary 46B20}%
\keywords{Spaces of Lipschitz functions, Lipschitz-free spaces, diameter two properties, de Leeuw's map, finitely additive measures}%
\begin{document}

\begin{abstract}
We separate the local diameter two property from the diameter two property and their weak-star counterparts from each other in spaces of Lipschitz functions. We characterise the $w^*$-LD$2$P, the $w^*$-D$2$P, the LD$2$P, and the SD$2$P in these spaces. 
We introduce a generalised version of cyclical monotonicity to study functionals on spaces of Lipschitz functions.

\end{abstract}
\maketitle
\section{Introduction}
Throughout the paper, let $M$ be a pointed metric space, i.e. a metric space with a fixed point $0$, with a distance function $d$. We denote by $\Lip_0(M)$ the Banach space of all Lipschitz functions $f\colon M\rightarrow\mathbb{R}$ satisfying $f(0)=0$ equipped with the norm
\[
    \|f\|=\sup\Big\{\frac{|f(x)-f(y)|}{d(x,y)}\colon x, y\in M,\ x\neq y\Big\}.
\]
Let $\delta\colon M\rightarrow \Lip_0(M)^*$ be the canonical isometric embedding of $M$ into $\Lip_0(M)^*$, which is given by $x\mapsto \delta_x$, where $\delta_x(f)=f(x)$. 
The norm closed linear span of $\delta(M)$ in $\Lip_0(M)^*$ is called the \emph{Lipschitz-free space} over $M$ and is denoted by $\mathcal{F}(M)$. It is well known that
$\mathcal{F}(M)^*= \Lip_0(M)$.

Let $X$ be a real Banach space with the closed unit ball $B_X$. Recall \cite{MR3098474, MR4023351} that $X$ has the
\begin{enumerate}
\label{def: d2p-s}
\item \emph{local diameter $2$ property} (briefly, \emph{LD$2$P}) if every slice of $B_X$ has dia\-meter $2$;
\item \emph{diameter $2$ property} (briefly, \emph{D$2$P}) if every nonempty relatively weakly open subset
of $B_X$ has diameter $2$;
\item \emph{strong diameter $2$ property} (briefly, \emph{SD$2$P}) 
if every convex combination of slices of $B_X$ has diameter $2$, 
i.e. the diameter of $\sum_{i=1}^n \lambda_i S_i$ is $2$ 
whenever $n\in\mathbb N$, $\lambda_1,\dotsc,\lambda_n\geq 0$ with $\sum_{i=1}^n\lambda_i=1$, 
and $S_1,\dotsc,S_n$ are slices of $B_X$;
\item\emph{symmetric strong diameter $2$ property} (briefly, \emph{SSD$2$P}) if 
for every $n\in\mathbb N$, every family $\{S_1,\dotsc, S_n\}$ of slices of $B_X$, and every $\varepsilon>0$, 
there exist $f_1\in S_1,\dotsc, f_n\in S_n$, and $g\in B_{X}$ with $\|g\|>1-\varepsilon$ 
such that $f_i\pm g\in S_i$ for every $i\in \{1,\ldots,n\}$.
\end{enumerate}
If $X$ is a dual space, then the weak-star counterparts of these diameter two properties ($w^
*$-LD$2$P, $w^*$-\,D$2$P, $w^*$-\,SD$2$P, and $w^*$-\,SSD$2$P) are defined by replacing slices and weakly open subsets in the above definitions by $w^*$-slices and $w^*$-open subsets, respectively. 
It is known that these properties are distinct and $(4)\Rightarrow (3) \Rightarrow (2) \Rightarrow (1)$, and that these properties are distinct from their weak-star counterparts in dual Banach spaces.

 In this paper, we study these diameter two properties in spaces of Lipschitz functions. The $w^*$-SD$2$P and the $w^*$-SSD$2$P in $\Lip_0(M)$ have been characterised and shown to be different in \cite{MR4026495, MR3803112}. It has been shown in \cite{MR4849357, MR4757422}, that the D$2$P, the SD$2$P, and the SSD$2$P are all different and that all of the diameter two properties are different from their weak-star counterparts in $\Lip_0(M)$ (in fact, it is known that the $w^*$-SSD$2$P does not imply the LD$2$P in these spaces). It has remained unknown whether the ($w^*$-)LD$2$P differs from the ($w^*$-)D$2$P in $\Lip_0(M)$ (see \cite[Question~5.4]{MR4849357} and \cite[p. 3]{MR4757422}). As far as we are aware, there are no known examples of dual Banach spaces with the LD$2$P but without the D$2$P (see \cite{MR4477423, MR3334951, MR4735972} for known examples of Banach spaces with the LD$2$P but without the D$2$P).

 In Example~\ref{Ex:new}, we construct a metric space $M$ for which the space $\Lip_0(M)$ has the LD$2$P, but lacks the $w^*$-D$2$P. Therefore, we answer \cite[Question~5.4]{MR4849357} (see also \cite[p. 3]{MR4757422}) and give an example of a dual Banach space with the LD$2$P that lacks the D$2$P. To achieve this, we introduce and make use of a representation theorem for $\Lip_0(M)^*$, which might be of independent interest. We also characterise the $w^*$-LD$2$P, the $w^*$-D$2$P, the LD$2$P, and the SD$2$P in $\Lip_0(M)$. Recall that the LD$2$P, the D$2$P, and the SD$2$P have dual formulations in terms of local octahedrality, weak octahedrality, and octahedrality (see \cite{MR3150166} and \cite{MR3346197}). Consequently, our results can also be restated in terms of octahedrality.

The paper is outlined as follows. In section 2, we introduce a representation theorem for the dual of the space of Lipschitz functions. In section 3, we present new conditions for the space $\Lip_0(M)$ to have the LD$2$P and the SD$2$P. In section 4, we give characterisations of the $w^*$-LD$2$P and the $w^*$-D$2$P for the spaces of Lipschitz functions. Finally, in section 5, we provide an example of a metric space $M$, for which the space $\Lip_0(M)$ has the LD$2$P but lacks the $w^*$-D$2$P.

In order to study the space $\Lip_0(M)^*$, we make use of the following idea.
Set
\begin{equation*}\label{eq: Gamma:=(M times M) setminus ((x,x): x in M)}
    \widetilde{M} = \big\{ (x,y) \colon x,y\in M,\ x\neq y\big\}.
\end{equation*}
The de Leeuw's map $\Phi\colon \Lip_0(M)\to \ell_{\infty}(\widetilde{M})$ (see \cite[Definition 2.31 and Theorem 2.35]{MR3792558}) is a linear isometry defined by $\Phi f = \tilde{f}$, where
\begin{align*}
    \tilde{f}(x,y)= \frac{f(x)-f(y)}{d(x,y)}.
\end{align*}
Recall that $\ell_\infty(\widetilde M)^*=\ba$, the Banach space of bounded and finitely additive signed measures on the power set of $\widetilde{M}$, with the norm $\|\mu\|=|\mu|(\widetilde{M})$. Thus, for every $F\in \Lip_0(M)^*$, there exists $\mu\in \ba$ with $\|\mu\|=\|F\|$ such that $\Phi^*\mu = F$, i.e.
\begin{equation*}\label{eq: Lip0 - ba(M)}
    F(f)= \int_{\widetilde{M}} \tilde{f} d\mu \qquad \text{for all $f\in \Lip_0(M)$.}
\end{equation*}
One may also regard the de Leeuw's map as embedding $\Lip_0(M)$ into the Banach space $C(\beta \widetilde M)$ of continuous functions on the Stone--\v Cech compactification of $\widetilde M$ (see \cite[p.~102]{MR3792558}). This approach has been used to associate elements of the dual space $\Lip_0(M)^*$ with countably additive Radon measures on $\beta\widetilde{M}$. See, e.g. \cite{MR4771935, aliaga2024leeuwrepresentationsfunctionalslipschitz, aliaga2024solutionextremepointproblem, smith2024choquettheorylipschitzfreespaces} for recent developments using this method.

\textbf{Notations.}
Given a real Banach space $X$, we denote the closed unit ball, the unit sphere, and the dual space of $X$ by $B_X$, $S_X$, and $X^*$, respectively. 
A \emph{slice} of $B_{X}$ is a set of the form
\[
S(x^*, \alpha)=\big\{x\in B_{X}\colon x^*(x)>1-\alpha\big\},
\]
where $x^*\in S_{X^*}$ and $\alpha>0$. 
If $X$ is a dual space, then slices whose defining functional comes from the predual of $X$ are called \emph{$w^*$-slices}.

For $x,y\in M$ with $x\neq y$, 
we define a \emph{molecule} $m_{x,y}$ as a norm one element $\tfrac{\delta_x-\delta_y}{d(x,y)}$ in $\mathcal{F}(M)$. 

Define $\pi\colon\widetilde{M}\rightarrow \mathcal{P}(M)$ by $\pi(x,y)=\{x,y\}$ for all $(x,y)\in \widetilde{M}$. Note, that for a subset $A$ of $\widetilde{M}$, we have
\[
\pi(A)=\big\{x\in M \colon \text{there exists $y\in M$ such that $(x,y)\in A$ or $(y,x)\in A$} \big\}.
\]

\section{Dual of the space of Lipschitz functions}

Every functional from the space $\Lip_0(M)^*$ can be extended norm-preservingly to a finitely additive measure $\mu\in ba(\widetilde{M})$. However, given a measure $\mu \in ba(\widetilde{M})$, it is difficult to determine whether $\|\Phi^*\mu\|=\|\mu\|$. In this section, we provide a way of doing so.

Recall that a subset $A$ of $\widetilde{M}$ is said to be \emph{cyclically monotonic} (see, e.g. \cite{MR4095811, MR2459454}) if for any finite sequence of pairs $(x_1,y_1),\dotsc, (x_n,y_n)\in A$, we have
    \begin{equation*}
        \sum_{i=1}^n d(x_i,y_{i+1}) \geq \sum_{i=1}^n d(x_i,y_i),
    \end{equation*}
   where $y_{n+1}=y_1$.
We introduce the following generalisation of this notion.

\begin{definition}
    Let $\gamma\in(0,1]$ and let $A$ be a subset of $\widetilde{M}$. We say that $A$ is \emph{$\gamma$-cyclically monotonic} if for any finite sequence of pairs $(x_1,y_1),\dotsc, (x_n,y_n)\in A$, we have
    \begin{equation*}
        \sum_{i=1}^n\min \big\{d(x_i,y_{i+1})-\gamma d(x_i,y_i), d(y_i,y_{i+1})\big\}\geq 0,
    \end{equation*}
   where $y_{n+1}=y_1$. 
\end{definition}
\noindent
Note that a subset $A$ of $\widetilde{M}$ is cyclically monotonic if and only if it is $1$-cyclically monotonic. The following result generalises \cite[Theorem~2.4 (i)$\Leftrightarrow$(iv)]{MR4095811}.

\begin{proposition}\label{prop:delta_CM_characterisation}
    Let $A$ be a subset of $\widetilde{M}$ and let $\gamma\in(0,1]$. Then $A$ is $\gamma$-cyclically monotonic if and only if there exists $f\in B_{\Lip_0(M)}$ satisfying $f(m_{x,y})\geq \gamma$ for all $(x,y)\in A$.
\end{proposition}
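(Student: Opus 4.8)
The plan is to translate the condition $f(m_{x,y})\ge\gamma$ into the pointwise statement $f(x)-f(y)\ge\gamma\, d(x,y)$ (recall that $f(m_{x,y})=(f(x)-f(y))/d(x,y)$), and to note that $f\in B_{\Lip_0(M)}$ means exactly that $f$ is $1$-Lipschitz and $f(0)=0$. I would then prove the two implications separately, the first being a short telescoping argument and the second an explicit potential construction generalising the classical one behind \cite[Theorem~2.4]{MR4095811}.

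For the implication that the existence of such an $f$ forces $\gamma$-cyclical monotonicity, I would argue term by term. Fix a finite sequence $(x_1,y_1),\dots,(x_n,y_n)\in A$ with $y_{n+1}=y_1$. For each $i$ the $1$-Lipschitz bound gives $d(y_i,y_{i+1})\ge f(y_i)-f(y_{i+1})$, so the second entry of the minimum dominates $f(y_i)-f(y_{i+1})$; combining $d(x_i,y_{i+1})\ge f(x_i)-f(y_{i+1})$ with $\gamma\, d(x_i,y_i)\le f(x_i)-f(y_i)$ shows that the first entry $d(x_i,y_{i+1})-\gamma\, d(x_i,y_i)$ is also at least $f(y_i)-f(y_{i+1})$. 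Hence every summand is at least $f(y_i)-f(y_{i+1})$, and summing around the cycle makes the right-hand side telescope to $0$. The only subtlety is to pick the $1$-Lipschitz estimate for which $f(x_i)$ cancels; the other choice leaves an uncontrolled term.

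For the converse I would construct $f$ as a shortest-path (infimal-convolution) potential. Define, for $p,q\in M$,
\[
    \rho(p,q)=\min\Big(d(p,q),\ \inf_{x\colon (x,p)\in A}\big(d(x,q)-\gamma\, d(x,p)\big)\Big),
\]
with $\rho(p,q)=d(p,q)$ when no $x$ with $(x,p)\in A$ exists, and set
\[
    f(q)=-\inf\Big\{\,\sum_{j=0}^{k-1}\rho(p_j,p_{j+1})\ :\ k\ge 1,\ p_0=0,\ p_k=q,\ p_1,\dots,p_{k-1}\in M\,\Big\}.
\]
Granting that this infimum is finite, one gets the telescoping inequality $f(p)-f(q)\le\rho(p,q)$ for all $p,q\in M$; since $\rho(p,q)\le d(p,q)$ this yields that $f$ is $1$-Lipschitz, and the chain $0\to 0$ together with finiteness gives $f(0)=0$, so $f\in B_{\Lip_0(M)}$. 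For $(x,y)\in A$, taking the admissible witness $x'=x$ in the infimum defining $\rho(y,x)$ gives $\rho(y,x)\le d(x,x)-\gamma\, d(x,y)=-\gamma\, d(x,y)$, whence $f(y)-f(x)\le-\gamma\, d(x,y)$, i.e. $f(x)-f(y)\ge\gamma\, d(x,y)$, as required.

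The main obstacle is to show that the infimum defining $f$ is finite, equivalently that $\rho$ has no cycle of negative total cost; this is the sole point where $\gamma$-cyclical monotonicity is used, and where the minimum in the definition becomes essential. I would take an arbitrary finite cycle $p_0\to\cdots\to p_m=p_0$ and first use the triangle inequality to replace any run of consecutive steps realising the $d(\,\cdot\,,\cdot\,)$ alternative by a single such step, which does not increase the cost; if no step realises the $A$-alternative the cost is nonnegative since $d\ge 0$. Otherwise the cycle alternates between pair steps, associated with pairs $(x_i,y_i)\in A$ and starting at $y_i$, and connecting free steps. Combining the $i$-th pair step with the free step that follows it (ending at $y_{i+1}$) and applying the triangle inequality at the intermediate point bounds the combined cost below by $d(x_i,y_{i+1})-\gamma\, d(x_i,y_i)$, hence by $\min\{d(x_i,y_{i+1})-\gamma\, d(x_i,y_i),\,d(y_i,y_{i+1})\}$. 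Summing and invoking $\gamma$-cyclical monotonicity for the sequence $(x_i,y_i)$ shows the reduced, and therefore the original, cycle has nonnegative cost. Appending a fixed free return path from $q$ to $0$ then rules out $f(q)=-\infty$, completing the construction.
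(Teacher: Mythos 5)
Your proof is correct. The backward implication (existence of $f$ forces $\gamma$-cyclical monotonicity) is exactly the paper's own telescoping argument. In the forward implication you use the same underlying mechanism as the paper --- a potential function exists because there are no negative cycles --- but you execute it along a genuinely different, self-contained route. The paper writes $A=\{(x_i,y_i)\colon i\in I\}$, forms $\beta_{ij}=\min\{d(x_i,y_j)-\gamma d(x_i,y_i),\,d(y_i,y_j)\}$, observes that $\beta_{jj}=0$ and that $\gamma$-cyclical monotonicity is precisely nonnegativity of all cyclic sums of the $\beta_{ij}$, invokes the potential lemma \cite[Lemma~2.2]{MR4095811} to get reals $\alpha_i$ with $\alpha_i\leq\alpha_j+\beta_{ij}$, and then takes the McShane extension $f(x)=\inf_{i\in I}\bigl(\alpha_i+d(x,y_i)\bigr)$. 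You instead build the potential directly on all of $M$ as a negative shortest-path cost for your modified cost $\rho$; the no-negative-cycle verification is the sole place where $\gamma$-cyclical monotonicity enters, and the McShane step is absorbed into the path construction because free steps of cost $d$ are available everywhere. In effect you re-prove the cited potential lemma rather than quoting it. The paper's route buys brevity and a reusable combinatorial lemma (it is invoked again in the proof of Lemma~\ref{lemma: Integer metric space cyclically monotonic}); your route buys a self-contained argument that also delivers $f(0)=0$ automatically, a normalisation the paper's construction handles only implicitly (its $f$ need not vanish at $0$ and must be shifted by a constant).

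Two details to tighten, neither a real gap. First, the $A$-alternative in $\rho$ is an infimum and need not be attained, so a step cannot literally ``realise'' it; either allow each such step an $\varepsilon/m$ of slack, or, cleaner, note that the total cost of a cycle is the infimum, over all choices of alternative and of witness $x_j$ with $(x_j,p_j)\in A$ for each step, of the resulting explicit sums, and then bound every such sum from below by a $\gamma$-cyclical-monotonicity sum exactly as you describe. Second, after merging consecutive free steps your cycle need not strictly alternate: two pair steps may be adjacent. This is harmless --- the intervening free run is empty, the triangle-inequality step is vacuous, and the bound $d(x_i,y_{i+1})-\gamma d(x_i,y_i)\geq\min\bigl\{d(x_i,y_{i+1})-\gamma d(x_i,y_i),\,d(y_i,y_{i+1})\bigr\}$ still applies --- but it should be said.
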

\begin{proof}
First, assume that $A$ is $\gamma$-cyclically monotonic.
Write $A=\{(x_i,y_i)\in \widetilde{M}\colon i\in I\}$ for a set of indices $I$.
For all $i,j\in I$, define 
    \[
    \beta_{ij}=\min\big\{d(x_i,y_j)-\gamma d(x_i,y_i), d(y_i,y_j)\big\}.
    \]
    It is clear that $\beta_{jj}=0$ for all $j\in I$. Therefore, by \cite[Lemma~2.2]{MR4095811}, there exist real numbers $\alpha_j$, $j\in I$, such that for all $i,j\in I$,
    \[
    \alpha_i\leq \alpha_j+\beta_{ij}.
    \]
    (Note that \cite[Lemma~2.2]{MR4095811} is originally stated for the case $I=\mathbb{N}$, but the result remains valid for any set $I$ with an almost identical proof.)
    
    Define $f\colon M\to \mathbb{R}$ by
    \[
    f(x)=\inf_{i\in I} \bigl(\alpha_i+d(x,y_i)\bigr).
    \]
    Note that for all $i,j\in I$,
    \[
    f(y_i)-f(y_j)=\alpha_i-\alpha_j\leq \beta_{ij} \leq d(y_i,y_j).
    \]
    Thus, $f$ is a $1$-Lipschitz McShane extension from $\{y_i\colon i\in I\}$ to the entire space $M$.
    
    For all $i,j\in I$, we have $\alpha_i\leq \alpha_j+ \beta_{ij}$, and therefore,
    \[
    \alpha_i+\gamma d(x_i,y_i)\leq \alpha_j+d(x_i,y_j).
    \]
    Thus, for all $i\in I$, we have
    \[
    f(x_i)= \inf_{j\in I} \bigl(\alpha_j+d(x_i,y_j)\bigr)\geq  \alpha_i+\gamma d(x_i,y_i)= f(y_i) +  \gamma d(x_i,y_i).
    \]
    Consequently, $f(m_{x,y})\geq \gamma$ for all $(x,y)\in A$.

    Now assume that there exists $f\in B_{\Lip_0(M)}$ satisfying $f(m_{x,y})\geq \gamma$ for all $(x,y)\in A$.
     For any finite sequence of pairs $(x_1,y_1),\dotsc, (x_n,y_n)\in A$ and $y_{n+1}=y_1$, we get
    \begin{align*}
        &\sum_{i=1}^n\min\big\{d(x_i,y_{i+1})-\gamma d(x_i,y_i), d(y_i,y_{i+1})\big\}\\
        &\qquad \geq \sum_{i=1}^n\min\big\{f(x_i)-f(y_{i+1})-f(x_i)+f(y_i), f(y_i)-f(y_{i+1})\big\}\\
        &\qquad =\sum_{i=1}^n f(y_i)-f(y_{i+1})=0.
    \end{align*}
\end{proof}

We are now prepared to provide criteria for determining whether a given $\mu \in \ba$ satisfies $\|\Phi^* \mu\| = \|\mu\|$. Let $\mathfrak{r}\colon \widetilde{M}\to \widetilde{M}$ be the reflection mapping defined by $\mathfrak{r}(x,y)=(y,x)$.

\begin{theorem}\label{thm: Lip_0(M)* description in ba space}
    Let $\mu=\mu^+-\mu^-\in \ba$ be the Jordan decomposition of $\mu$. The following statements are equivalent:
    \begin{enumerate}[label=\textup{(\roman*)}]
        \item $\|\Phi^* \mu\|=\|\mu\|$.
        \item For every $\gamma\in (0,1)$, there exists a subset $A$ of $\widetilde{M}$ and an $f\in B_{\Lip_0(M)}$ satisfying
        \begin{equation*}\label{ineq:ABbigmeasure}
        \mu^+(A)+\mu^-(\mathfrak{r}(A))\geq\gamma |\mu|(\widetilde{M})
        \end{equation*}
        and $f(m_{x,y})\geq \gamma$ for all $(x,y)\in A$.
        \item For every $\gamma\in (0,1)$, there exists a $\gamma$-cyclically monotonic subset $A$ of $\widetilde{M}$ satisfying
        \begin{equation*}
            \mu^+(A)+\mu^-(\mathfrak{r}(A))\geq\gamma |\mu|(\widetilde{M}).
        \end{equation*}
    \end{enumerate}
\end{theorem}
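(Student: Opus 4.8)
The plan is to prove the two equivalences (ii)$\Leftrightarrow$(iii) and (i)$\Leftrightarrow$(ii). The equivalence (ii)$\Leftrightarrow$(iii) is immediate from Proposition~\ref{prop:delta_CM_characterisation}: in both statements the measure inequality $\mu^+(A)+\mu^-(\mathfrak{r}(A))\ge\gamma|\mu|(\widetilde{M})$ is literally the same, and the proposition asserts that a set $A$ admits an $f\in B_{\Lip_0(M)}$ with $f(m_{x,y})\ge\gamma$ on $A$ precisely when $A$ is $\gamma$-cyclically monotonic. Hence all the content lies in relating (i) to (ii). Throughout I would use that $\Phi^*\mu(f)=\int_{\widetilde{M}}\tilde f\,d\mu=\int\tilde f\,d\mu^+-\int\tilde f\,d\mu^-$ with $\tilde f(x,y)=f(m_{x,y})\in[-1,1]$ for $f\in B_{\Lip_0(M)}$, the antisymmetry $f(m_{x,y})=-f(m_{y,x})$ (which is what forces the reflection $\mathfrak{r}$ into the picture), and the fact that $\|\Phi^*\mu\|=\sup_{f\in B_{\Lip_0(M)}}\Phi^*\mu(f)\le\|\mu\|$, the inequality holding because $\Phi$ is an isometry. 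The case $\|\mu\|=0$ is trivial, so I assume $\|\mu\|>0$.

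For (ii)$\Rightarrow$(i), fix $\gamma$ and take the associated $A$ and $f$. Splitting the two integrals over $A,A^c$ and over $\mathfrak{r}(A),\mathfrak{r}(A)^c$, and using $f(m_{x,y})\ge\gamma$ on $A$, $f(m_{x,y})\le-\gamma$ on $\mathfrak{r}(A)$ (by antisymmetry), and $|f(m_{x,y})|\le1$ elsewhere, I would obtain
\[
\Phi^*\mu(f)\ge(\gamma+1)\bigl(\mu^+(A)+\mu^-(\mathfrak{r}(A))\bigr)-\|\mu\|\ge(\gamma^2+\gamma-1)\|\mu\|.
\]
Letting $\gamma\to1^-$ yields $\|\Phi^*\mu\|\ge\|\mu\|$, and combined with the reverse inequality this gives equality.

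The harder direction is (i)$\Rightarrow$(ii). Given $\|\Phi^*\mu\|=\|\mu\|$ and $\gamma\in(0,1)$, I would set $\varepsilon=\tfrac12(1-\gamma)^2\|\mu\|$ and choose $f\in B_{\Lip_0(M)}$ with $\Phi^*\mu(f)>\|\mu\|-\varepsilon$. From $\int\tilde f\,d\mu^+\le\mu^+(\widetilde{M})$, $-\int\tilde f\,d\mu^-\le\mu^-(\widetilde{M})$, and $\|\mu\|=\mu^+(\widetilde{M})+\mu^-(\widetilde{M})$, one first deduces the two one-sided deficiency bounds $\int(1-\tilde f)\,d\mu^+<\varepsilon$ and $\int(1+\tilde f)\,d\mu^-<\varepsilon$. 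I then define $A=\{(x,y):f(m_{x,y})\ge\gamma\}$, so that $f$ automatically witnesses the second condition of (ii). A Chebyshev-type estimate applied to the nonnegative integrand $1-\tilde f$ on $A^c$ gives $\mu^+(A^c)<\varepsilon/(1-\gamma)$, and since $\{f(m_{x,y})>-\gamma\}=\mathfrak{r}(A)^c$ by antisymmetry, the same estimate applied to $1+\tilde f$ gives $\mu^-(\mathfrak{r}(A)^c)<\varepsilon/(1-\gamma)$. Adding these,
\[
\mu^+(A)+\mu^-(\mathfrak{r}(A))>\|\mu\|-\frac{2\varepsilon}{1-\gamma}=\gamma\|\mu\|,
\]
which is exactly the inequality demanded in (ii).

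The main obstacle is precisely this last direction. The crux is to identify the correct super-level set $A=\{f(m_{x,y})\ge\gamma\}$ and, via the antisymmetry of $f$ on molecules, to see that a deficiency in $\int\tilde f\,d\mu^-$ records $\mu^-$-mass on $\mathfrak{r}(A)$ rather than on $A$ itself; this is the sole place where the reflection $\mathfrak{r}$ is genuinely needed. Choosing $\varepsilon$ quadratically small in $1-\gamma$ is what makes the two Chebyshev estimates close the gap and deliver $\gamma\|\mu\|$ exactly. The remaining directions, and the passage (ii)$\Leftrightarrow$(iii) through Proposition~\ref{prop:delta_CM_characterisation}, are then routine.
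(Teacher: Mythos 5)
Your proposal is correct and takes essentially the same route as the paper's proof: the same near-maximizing $f$, the same super-level set $A=\{(x,y)\colon f(m_{x,y})\ge\gamma\}$, the same use of antisymmetry of $\tilde f$ to bring in $\mathfrak{r}(A)$, a tolerance quadratic in $1-\gamma$ for (i)$\Rightarrow$(ii), and the identical appeal to Proposition~\ref{prop:delta_CM_characterisation} for (ii)$\Leftrightarrow$(iii). The only difference is bookkeeping---you split the integrals separately over $\mu^+$ and $\mu^-$ and phrase the estimates as deficiency/Chebyshev bounds, whereas the paper partitions $\widetilde{M}$ into $A$, $\mathfrak{r}(A)$, and the remainder---and both (ii)$\Rightarrow$(i) arguments even land on the same lower bound $(\gamma^2+\gamma-1)\|\mu\|$ before letting $\gamma\to 1$.
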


\begin{proof}
    Throughout the proof, we assume without loss of generality that $\|\mu\|=1$. Let $F=\Phi^* \mu$.

    (i)$\Rightarrow$(ii). Let $\gamma\in (0,1)$, let $f\in B_{\Lip_0(M)}$ be such that $F(f)>1- (1-\gamma)^2$, and let 
    \[
    A=\bigl\{(x,y)\in \widetilde{M}\colon f(m_{x,y})\geq \gamma\bigr\}.
    \]
    Then $\mu^+(A)+\mu^-(\mathfrak{r}(A))\geq \gamma$ because, by setting $B=\mathfrak{r}(A)$, we get
    \begin{align*}
        1- (1-\gamma)^2&< F(f)= \int_A \tilde{f} d\mu+\int_B \tilde{f} d\mu +\int_{\widetilde{M}\setminus (A\cup B)} \tilde{f} d\mu\\
        &\leq \mu^+(A)+\mu^-(B)+\gamma |\mu|\big(\widetilde{M}\setminus (A\cup B)\big)\\
        &\leq \mu^+(A)+\mu^-(B)+\gamma \big(1-\mu^+(A)-\mu^-(B)\big)\\
        &=  \gamma+(1-\gamma)\big(\mu^+(A)+\mu^-(B)\big).
    \end{align*}

    (ii)$\Leftrightarrow$(iii) follows directly from Proposition \ref{prop:delta_CM_characterisation}.

    (ii)$\Rightarrow$(i). It suffices to show that $\|F\|\geq 1$. Fix $\gamma\in (0,1)$, let a subset $A$ of $\widetilde{M}$ and a function $f\in B_{\Lip_0(M)}$ be as in (ii), and set $B=\mathfrak{r}(A)$. Then
    \begin{align*}
        \int_{A}\tilde{f}d\mu\geq \gamma\mu^+(A)\quad \text{and}\quad \int_{B}\tilde{f}d\mu \geq \gamma \mu^-(B).
    \end{align*}
    Consequently,
    \begin{align*}
        \|F\|&\geq F(f)=\int_{A}\tilde{f}d\mu+\int_{B}\tilde{f}d\mu+\int_{\widetilde{M}\setminus (A\cup B)}\tilde{f}d\mu\\
        &\geq \gamma \big(\mu^+(A)+\mu^-(B)\big)-(1-\gamma) \\
        &\geq \gamma^2|\mu|(\widetilde{M})-(1-\gamma)\\
        &= \gamma^2-(1-\gamma).
    \end{align*}
    Letting $\gamma \to 1$, we get $\|F\|\geq 1$.
\end{proof}

Given $F\in \Lip_0(M)^*$, the measure $\mu\in \ba$ satisfying $\Phi^* \mu = F$ and $\|\mu\|=\|F\|$ is not unique. We note, similarly to \cite[Proposition~3]{MR4369296}, that $\mu$ can be chosen to be positive.
\begin{proposition}\label{prop: Lip_0(M)* positive measures}
        Let $\nu \in \ba$. Then there exists a positive $\mu\in \ba$ satisfying $\Phi^* \mu = \Phi^* \nu$ and $\|\mu\|=\|\nu\|$. Consequently, for every $F\in \Lip_0(M)^*$, there exists a positive $\mu\in \ba$ satisfying $\Phi^*\mu = F$ and $\|\mu\|=\|F\|$.
\end{proposition}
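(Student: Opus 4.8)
The plan is to exploit the antisymmetry of the de Leeuw map under the reflection $\mathfrak{r}$. First I would record the elementary identity $\tilde{f}\circ\mathfrak{r}=-\tilde{f}$ for every $f\in\Lip_0(M)$, which is immediate from $\tilde{f}(y,x)=\tfrac{f(y)-f(x)}{d(y,x)}=-\tilde{f}(x,y)$. Since $\mathfrak{r}$ is a bijection of $\widetilde{M}$ (indeed an involution), the map $g\mapsto g\circ\mathfrak{r}$ is a linear isometric involution of $\ell_\infty(\widetilde{M})$; its Banach-space adjoint induces an operation on $\ba=\ell_\infty(\widetilde{M})^*$, which I denote $\mathfrak{r}_*$ and which acts on sets by $(\mathfrak{r}_*\lambda)(E)=\lambda(\mathfrak{r}(E))$. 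Being the adjoint of an isometric isomorphism, $\mathfrak{r}_*$ is itself a linear isometry of $\ba$; moreover it carries positive measures to positive measures, and since the constant function $\mathbf 1$ satisfies $\mathbf 1\circ\mathfrak{r}=\mathbf 1$, it preserves the total mass $\lambda(\widetilde{M})$.

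Next I would combine these two facts. Using the change-of-variables identity $\int_{\widetilde{M}} g\, d(\mathfrak{r}_*\lambda)=\int_{\widetilde{M}} (g\circ\mathfrak{r})\, d\lambda$ for $g\in\ell_\infty(\widetilde{M})$ (which holds on indicators by the definition of $\mathfrak{r}_*$, hence on simple functions by linearity, and then on all bounded functions by uniform density of simple functions) together with $\tilde{f}\circ\mathfrak{r}=-\tilde{f}$, one obtains, for every $\lambda\in\ba$ and every $f\in\Lip_0(M)$,
\[
\Phi^*(\mathfrak{r}_*\lambda)(f)=\int_{\widetilde{M}}\tilde{f}\,d(\mathfrak{r}_*\lambda)=\int_{\widetilde{M}}(\tilde{f}\circ\mathfrak{r})\,d\lambda=-\int_{\widetilde{M}}\tilde{f}\,d\lambda=-\Phi^*\lambda(f),
\]
so that $\Phi^*(\mathfrak{r}_*\lambda)=-\Phi^*\lambda$.

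With these tools in hand the proposition is quick. Writing the Jordan decomposition $\nu=\nu^+-\nu^-$, I would set $\mu=\nu^++\mathfrak{r}_*\nu^-$. Then $\mu$ is positive as a sum of two positive measures. Applying $\Phi^*$ and using $\Phi^*(\mathfrak{r}_*\nu^-)=-\Phi^*\nu^-$ gives $\Phi^*\mu=\Phi^*\nu^+-\Phi^*\nu^-=\Phi^*\nu$. Finally, since $\mu$ is positive and $\mathfrak{r}_*$ preserves total mass,
\[
\|\mu\|=\mu(\widetilde{M})=\nu^+(\widetilde{M})+(\mathfrak{r}_*\nu^-)(\widetilde{M})=\nu^+(\widetilde{M})+\nu^-(\widetilde{M})=|\nu|(\widetilde{M})=\|\nu\|.
\]
The concluding clause then follows by first choosing any norm-preserving representation $\nu\in\ba$ of $F$ (which exists because $\Phi$ is an isometry, as recalled at the start of Section~2) and applying the first part to this $\nu$.

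I expect the only points requiring care to be the bookkeeping around the pushforward $\mathfrak{r}_*$—in particular justifying the change-of-variables identity for finitely additive measures and verifying that $\mathfrak{r}_*$ preserves positivity and total mass—rather than any genuine difficulty; the conceptual heart is simply the sign reversal $\tilde{f}\circ\mathfrak{r}=-\tilde{f}$, which lets the negative part $\nu^-$ be absorbed into a positive measure without changing either $\Phi^*\nu$ or the total variation.
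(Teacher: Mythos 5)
Your proof is correct, and it rests on the same key idea as the paper's proof: use the reflection $\mathfrak{r}$ and the antisymmetry $\tilde{f}\circ\mathfrak{r}=-\tilde{f}$ to turn the negative part of $\nu$ into positive mass without changing $\Phi^*\nu$ or the total variation. The implementations differ, and the difference is worth recording. The paper fixes exact Hahn decomposition sets $A,B\subseteq\widetilde{M}$ with $A\cap B=\varnothing$, $A\cup B=\widetilde{M}$, $\nu^+(B)=\nu^-(A)=0$, and defines $\mu(C)=\nu(A\cap C)-\nu(B\cap\mathfrak{r}(C))$; unwinding the definitions, this is precisely your measure $\mu=\nu^++\mathfrak{r}_*\nu^-$, so the two arguments construct the same object. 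You, however, reach it through the Jordan decomposition and the pushforward $\mathfrak{r}_*$ alone, never invoking Hahn sets. That is a genuine technical advantage: for merely finitely additive measures an exact Hahn decomposition need not exist (only approximate ones are guaranteed in general, since the supremum $\sup_{F\subseteq\widetilde{M}}\nu(F)$ defining $\nu^+(\widetilde{M})$ need not be attained), whereas the Jordan decomposition $\nu=\nu^+-\nu^-$ with $|\nu|=\nu^++\nu^-$ always exists for bounded charges. So your route yields the proposition in full generality exactly as stated, while the paper's proof as written implicitly assumes the existence of the Hahn sets and would otherwise require an approximation argument. The points you flag as needing care --- the change-of-variables identity for $\mathfrak{r}_*$ (indicators, then simple functions, then uniform limits) and the preservation of positivity and total mass --- are indeed routine, and your justifications of them are adequate.
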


\begin{proof}
Let $\nu = \nu^+-\nu^-\in \ba$ be the Jordan decomposition of $\nu$ and let $A,B$ be subsets of $\widetilde{M}$ satisfying $A\cap B = \varnothing$, $A\cup B = \widetilde{M}$, and $\nu^+(B)=0$ and $\nu^-(A)=0$. Define $\mu\in \ba$ by
\[
\mu(C) = \nu(A\cap C)-\nu(B\cap \mathfrak{r}(C)).
\]
For any $f\in \Lip_0(M)$, we have
\begin{align*}
    (\Phi^* \mu) (f) &= \int_{\widetilde{M}}\tilde{f}d\mu = \int_A \tilde{f}d\mu+\int_{\mathfrak{r}(B)} \tilde{f}d\mu \\
    &=  \int_A \tilde{f}d\nu+\int_B \tilde{f}d\nu = \int_{\widetilde{M}}\tilde{f}d\nu=( \Phi^* \nu )(f).
\end{align*}
Furthermore,
\begin{align*}
    \|\mu\|=\mu(\widetilde{M})=\nu(A)-\nu(B)=\nu^+(A)+\nu^-(B) = |\nu|(\widetilde{M})=\|\nu\|.
\end{align*}
\end{proof}

\begin{definition}
We call $\mu\in \ba$ \emph{optimal} if it is positive and satisfies $\|\Phi^* \mu\|=\|\mu\|$.
\end{definition}
\noindent
Proposition~\ref{prop: Lip_0(M)* positive measures} thus establishes that for every $F\in \Lip_0(M)^*$, there exists an optimal $\mu\in \ba$ satisfying $\Phi^*\mu = F$.

Throughout the rest of the paper we work exclusively with positive measures. To this end, we state the following special case of Theorem~\ref{thm: Lip_0(M)* description in ba space}.

\begin{corollary}\label{cor: Lip_0(M)* characterisation}
    Let $\mu\in \ba$ be positive. The following statements are equivalent:
    \begin{enumerate}[label=\textup{(\roman*)}]
        \item $\|\Phi^* \mu\|= \|\mu\|$.
        \item For every $\gamma\in (0,1)$, there exist a subset $A$ of $\widetilde{M}$ and an $f\in B_{\Lip_0(M)}$ satisfying $\mu(A)\geq\gamma \mu(\widetilde{M})$ and $f(m_{x,y})\geq \gamma$ for all $(x,y)\in A$.
        \item For every $\gamma\in (0,1)$, there exists a $\gamma$-cyclically monotonic subset $A$ of $\widetilde{M}$ satisfying $\mu(A)\geq\gamma \mu(\widetilde{M})$.
    \end{enumerate}
\end{corollary}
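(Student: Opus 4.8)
The plan is to obtain the corollary as a direct specialisation of Theorem~\ref{thm: Lip_0(M)* description in ba space} to the case of a positive measure, rather than re-running the argument from scratch. The only real observation needed is that when $\mu\in\ba$ is positive, its Jordan decomposition degenerates: one has $\mu^+=\mu$ and $\mu^-=0$ (the zero measure). I would open by recording this, together with its two immediate consequences, namely that $|\mu|(\widetilde{M})=\mu^+(\widetilde{M})+\mu^-(\widetilde{M})=\mu(\widetilde{M})$, and that $\mu^-(\mathfrak{r}(A))=0$ for every subset $A$ of $\widetilde{M}$ (since $\mathfrak{r}(A)$ is again a subset of $\widetilde{M}$ and $\mu^-$ is identically zero).

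With these identities in hand, the reduction is mechanical. Condition (i) of the corollary is word-for-word condition (i) of the theorem. In conditions (ii) and (iii) of the theorem, the common inequality $\mu^+(A)+\mu^-(\mathfrak{r}(A))\geq\gamma\,|\mu|(\widetilde{M})$ collapses, upon substituting $\mu^-=0$ and $|\mu|(\widetilde{M})=\mu(\widetilde{M})$, to exactly $\mu(A)\geq\gamma\,\mu(\widetilde{M})$; the side conditions (existence of $f\in B_{\Lip_0(M)}$ with $f(m_{x,y})\geq\gamma$ on $A$, respectively $\gamma$-cyclical monotonicity of $A$) are carried over unchanged. Thus the three statements of the corollary are precisely the three statements of the theorem under the standing hypothesis that $\mu$ is positive, and the equivalences transfer verbatim. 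I would simply invoke Theorem~\ref{thm: Lip_0(M)* description in ba space} to conclude, perhaps remarking that the equivalence (ii)$\Leftrightarrow$(iii) may equally be read off directly from Proposition~\ref{prop:delta_CM_characterisation}, exactly as in the proof of the theorem.

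Honestly, I do not expect any genuine obstacle in this argument: the content is entirely in recognising that positivity trivialises the Jordan decomposition, after which the corollary is a literal instance of the preceding theorem. The only points worth stating carefully are that $\mu^-$ being the zero measure forces the $\mathfrak{r}(A)$-term to vanish for \emph{every} $A$ (so no compatibility between $A$ and $\mathfrak{r}(A)$ need be tracked, as it had to be in the general proof), and that $|\mu|$ and $\mu$ agree on $\widetilde{M}$. Once those two substitutions are made explicit, nothing further is required.
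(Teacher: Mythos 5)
Your proposal is correct and matches the paper's intent exactly: the paper offers no separate proof, presenting the corollary as "the following special case of Theorem~\ref{thm: Lip_0(M)* description in ba space}", which is precisely your reduction via $\mu^+=\mu$, $\mu^-=0$, and $|\mu|(\widetilde{M})=\mu(\widetilde{M})$. Nothing further is needed.
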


We conclude this section by showing that, for certain metric spaces, a strengthened version of Corollary~\ref{cor: Lip_0(M)* characterisation} holds. To this end, we rely on the following observation.

\begin{lemma}\label{lemma: Integer metric space cyclically monotonic}
    Let $n\in \mathbb{N}$ and let $M$ be a metric space with $d(x,y)\in \{0,1,\dotsc,n\}$ for all $x,y\in M$. Let $\mu\in \ba$ be positive, let $\gamma\in (0,1)$, and let $A$ be a $\gamma$-cyclically monotonic subset of $\widetilde{M}$. Then there exists a cyclically monotonic subset $B$ of $A$ with $\mu(B)\geq  \mu(A)-2n(1-\gamma)\mu(\widetilde{M})$.
\end{lemma}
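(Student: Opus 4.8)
The plan is to translate the hypothesis into a Lipschitz function, round it to an integer-valued $1$-Lipschitz function, and read off $B$ from the pairs on which the rounded function attains its full slope. Since $A$ is $\gamma$-cyclically monotonic, Proposition~\ref{prop:delta_CM_characterisation} produces $f\in B_{\Lip_0(M)}$ with $f(x)-f(y)\ge\gamma\, d(x,y)$ for all $(x,y)\in A$. I record the \emph{defect} $\phi(x,y)=d(x,y)-\bigl(f(x)-f(y)\bigr)$, which satisfies $0\le\phi(x,y)\le(1-\gamma)d(x,y)\le(1-\gamma)n$ on $A$ (the lower bound because $f$ is $1$-Lipschitz), so that $\int_A\phi\,d\mu\le(1-\gamma)n\,\mu(\widetilde M)$.

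For $\theta\in[0,1)$ I set $g_\theta(x)=\lfloor f(x)-\theta\rfloor$. Because $d$ is integer-valued and $|f(x)-f(y)|\le d(x,y)$, a routine floor estimate shows that $g_\theta$ is $1$-Lipschitz, and I let
\[
B_\theta=\bigl\{(x,y)\in A\colon g_\theta(x)-g_\theta(y)=d(x,y)\bigr\}.
\]
Each $B_\theta$ is cyclically monotonic: on it the $1$-Lipschitz function $g_\theta$ realises slope $1$, so telescoping gives $\sum_i d(x_i,y_{i+1})\ge\sum_i\bigl(g_\theta(x_i)-g_\theta(y_{i+1})\bigr)=\sum_i\bigl(g_\theta(x_i)-g_\theta(y_i)\bigr)=\sum_i d(x_i,y_i)$ for any cycle in $B_\theta$ (equivalently, apply Proposition~\ref{prop:delta_CM_characterisation} with $\gamma=1$ to $g_\theta-g_\theta(0)$). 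Thus every $B_\theta$ is an admissible candidate, and it remains to choose $\theta$ making $\mu(A\setminus B_\theta)$ small.

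The key observation is that, for a fixed pair, $g_\theta(x)-g_\theta(y)$ counts the shifted integers $k+\theta$ lying in $(f(y),f(x)]$; as $\theta$ ranges over $[0,1)$ it takes only the two integers adjacent to $f(x)-f(y)$, each on an arc, so the \emph{bad} set $E(x,y)=\{\theta\colon(x,y)\notin B_\theta\}$ is an arc of length at most $\phi(x,y)$. A continuous average would give $\int_0^1\mu(A\setminus B_\theta)\,d\theta\le\int_A\phi\,d\mu$, but here lies the main obstacle: $\mu$ is only finitely additive on the power set of $\widetilde M$, so Fubini over the continuous parameter $\theta$ is not available. I circumvent this by sampling finitely many shifts $\theta_i=i/K$, $i=0,\dots,K-1$. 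Since each $E(x,y)$ is an arc, it contains at most $K\phi(x,y)+1$ of the grid points, and finite additivity legitimately interchanges the finite sum with the integral:
\[
\sum_{i=0}^{K-1}\mu(A\setminus B_{\theta_i})=\int_A\bigl|\{i\colon\theta_i\in E(x,y)\}\bigr|\,d\mu\le K\!\int_A\phi\,d\mu+\mu(A).
\]
Pigeonholing yields some $\theta_{i^*}$ with $\mu(A\setminus B_{\theta_{i^*}})\le\int_A\phi\,d\mu+\mu(A)/K$. Choosing $K=\lceil 1/((1-\gamma)n)\rceil$ makes the discretisation term $\mu(A)/K\le(1-\gamma)n\,\mu(\widetilde M)$, matching the genuine defect bound $\int_A\phi\,d\mu\le(1-\gamma)n\,\mu(\widetilde M)$; adding the two contributions produces exactly the factor $2$, so $B=B_{\theta_{i^*}}$ satisfies $\mu(B)\ge\mu(A)-2n(1-\gamma)\mu(\widetilde M)$. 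The degenerate case $\mu(\widetilde M)=0$ is trivial.
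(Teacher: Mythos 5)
Your proof is correct, and while it shares the paper's core mechanism (an integer shift-and-floor plus a pigeonhole over finitely many shifts, with integrality of the metric guaranteeing that rounding preserves the monotonicity inequalities and positivity/finite additivity sufficing for the pigeonhole), the execution is genuinely different. The paper never manipulates a Lipschitz function inside this lemma: it takes the potentials $\widetilde{\alpha}_i$ supplied by \cite[Lemma~2.2]{MR4095811} for the $\gamma$-cyclic inequalities, partitions the index set according to which of the $K$ windows $[\tfrac{k-1}{K},\tfrac{k}{K})$ contains $\{\widetilde{\alpha}_i\}$ (with $\tfrac{1}{2K}\le n(1-\gamma)\le\tfrac{1}{K}$), discards the one window of $\mu$-mass at most $1/K\le 2n(1-\gamma)$, and then checks arithmetically that the rounded potentials $\alpha_i=\lfloor\widetilde{\alpha}_i-\tfrac{k}{K}\rfloor$ satisfy $\alpha_i\le\alpha_j+\beta_{ij}$ on the surviving indices, certifying cyclic monotonicity again through \cite[Lemma~2.2]{MR4095811}. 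You instead round the function itself: you floor $f-\theta$ (in the spirit of the paper's Lemma~\ref{lemma: natural number values function}), keep exactly the pairs on which the floored function retains slope $1$, identify the bad shifts for a fixed pair as an arc of length at most the defect $\phi(x,y)\le(1-\gamma)n$, and run a first-moment argument over the grid $\{i/K\}$. In effect, the paper removes, uniformly over pairs, every pair that could conceivably be bad at the chosen shift (a window of width $1/K\ge n(1-\gamma)$ in the fractional part of the potential at $y_i$), whereas you remove only the genuinely bad pairs and optimise the shift by averaging; accordingly the factor $2$ arises differently, from the integrality constraint on $K$ in the paper versus from adding the defect term $\int_A\phi\,d\mu$ to the discretisation term $\mu(A)/K$ in your argument. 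What your route buys is a more geometric, self-contained proof (it uses Proposition~\ref{prop:delta_CM_characterisation} rather than re-invoking the external potential lemma, and it exhibits an explicit integer-valued norm-one witness $g_\theta$ attaining slope $1$ on $B$); what the paper's route buys is that the verification is purely combinatorial, with a single uniform window removal instead of the per-pair bad-arc computation.
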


\begin{proof}
    We assume without loss of generality, that $\mu(\widetilde{M})=1$ and $n(1-\gamma)< 1$. Throughout the proof,
    we denote the floor, ceiling, and fractional part of $\alpha\in\mathbb{R}$ by $\lfloor \alpha \rfloor$, $\lceil \alpha\rceil$, and $\{\alpha\}$, respectively. Write $A=\{(x_i,y_i)\in \widetilde{M}\colon i\in I\}$ for a set of indices $I$. For all $i,j \in I$, we define 
    \[
        \beta_{ij}=d(x_i,y_j)-d(x_i,y_i)
    \]
    and
    \[
        \widetilde{\beta}_{ij}=\min\bigl\{d(x_i,y_j)-\gamma d(x_i,y_i),d(y_i,y_j)\bigr\},
    \]
    and note that $\widetilde{\beta}_{ij}\leq\beta_{ij}+n(1-\gamma)$.
    
    By \cite[Lemma 2.2]{MR4095811}, there exist $\widetilde{\alpha}_i\in \mathbb{R}$, $i\in I$, so that $\widetilde{\alpha}_i\leq\widetilde{\alpha}_j+ \widetilde{\beta}_{ij}$ (as in the proof of Proposition~\ref{prop:delta_CM_characterisation}, we note that \cite[Lemma 2.2]{MR4095811} remains valid for any set $I$). By \cite[Lemma 2.2]{MR4095811}, it suffices to show that there exist a subset $J$ of $I$ and elements $\alpha_i\in \mathbb{R}$, $i\in J$, so that $\alpha_i\leq \alpha_j+\beta_{ij}$ for all $i,j\in J$ and $\mu(B)\geq \mu(A)-2n(1-\gamma)$, where $B=\{(x_i,y_i)\colon i\in J\}$.

    Fix $K\in \mathbb{N}$ so that $\frac{1}{2K}\leq n(1-\gamma)\leq\frac1K$. For every $k\in\{1,\ldots,K\}$, define a subset $I_k$ of $I$ by
    \[
    I_k=\Bigl\{i\in I\colon \{\widetilde{\alpha}_i\} \in\Big[\frac{k-1}{K},\frac{k}{K}\Big)\Bigr\}
    \] 
    and a subset $A_k=\{(x_i,y_i)\colon i\in I_k\}$ of $A$. Fix $k\in\{1,\ldots,K\}$ such that $\mu(A_k)\leq 1/K$. 
    Let $J=I\setminus I_k$, let $B=\{(x_i,y_i)\colon i\in J\}$, and let $\alpha_i=\lfloor\widetilde{\alpha}_i-\frac{k}{K}\rfloor$ for each $i\in J$. Note that $B=A\setminus A_k$ and that
    \[
    \mu(B)= \mu(A)-\mu(A_k)\geq \mu(A) -\frac{1}{K}\geq \mu(A)-2n(1-\gamma).
    \]    
    
    Fix $i,j\in J$. It remains to verify that $\alpha_i\leq \alpha_j+\beta_{ij}$. Note that
    \[
    \alpha_i-\alpha_j=\begin{cases}
        \lfloor \widetilde{\alpha}_i-\widetilde{\alpha}_j\rfloor, 
        &\text{if $\{\widetilde{\alpha}_i-\frac{k}{K}\}\geq\{\widetilde{\alpha}_j-\frac{k}{K}\}$};\\
        \lfloor \widetilde{\alpha}_i-\widetilde{\alpha}_j\rfloor+1, 
        &\text{if $\{\widetilde{\alpha}_i-\frac{k}{K}\}<\{\widetilde{\alpha}_j-\frac{k}{K}\}$}.
    \end{cases}
    \]
    If $\{\widetilde{\alpha}_i-\frac{k}{K}\}\geq\{\widetilde{\alpha}_j-\frac{k}{K}\}$, then
    \[
    \alpha_i-\alpha_j=\lfloor \widetilde{\alpha}_i-\widetilde{\alpha}_j\rfloor\leq \lfloor\widetilde{\beta}_{ij}\rfloor\leq \lfloor\beta_{ij}+n(1-\gamma) \rfloor=\beta_{ij}.
    \]
    
    Now assume that $\{\widetilde{\alpha}_i-\frac{k}{K}\}<\{\widetilde{\alpha}_j-\frac{k}{K}\}$. It suffices to show that $\widetilde{\alpha}_i-\widetilde{\alpha}_j<\beta_{ij}$ because then 
    \[
    \alpha_i-\alpha_j=\lfloor \widetilde{\alpha}_i-\widetilde{\alpha}_j\rfloor+1\leq \lceil \beta_{ij}\rceil = \beta_{ij}.
    \]
    To obtain a contradiction, suppose that $\widetilde{\alpha}_i-\widetilde{\alpha}_j\geq\beta_{ij}$. Then
    \[
    \beta_{ij}\leq \widetilde{\alpha}_i-\widetilde{\alpha}_j\leq \widetilde{\beta}_{ij} \leq \beta_{ij}+n(1-\gamma),
    \]
    and therefore,
    \[
    \{\widetilde{\alpha}_i-\frac{k}{K}\}-\{\widetilde{\alpha}_j-\frac{k}{K}\}=\{\widetilde{\alpha}_i-\widetilde{\alpha}_j\}-1\leq n(1-\gamma)-1.
    \]
    Consequently,
    \[
    \{\widetilde{\alpha}_j-\frac{k}{K}\}\geq 1-n(1-\gamma)+\{\widetilde{\alpha}_i-\frac{k}{K}\}\geq 1-\frac{1}{K}.
    \]
    However, by the definition of $J$, we have $\{\widetilde{\alpha}_j-\frac{k}{K}\}\in[0,1-\frac{1}{K})$.
\end{proof}

\begin{proposition}\label{prop: optimal mu Z}
 Let $n\in \mathbb{N}$ and let $M$ be a pointed metric space with $d(x,y)\in \{0,1,\dotsc,n\}$ for all $x,y\in M$. Let $\mu\in\ba$ be positive. The following conditions are equivalent:

\begin{enumerate}[label=\textup{(\roman*)}]
    \item $\|\Phi^*\mu\|=\|\mu\|$.
    \item For every $\gamma\in (0,1)$, there exist a subset $B$ of $\widetilde{M}$ and an $f\in B_{\Lip_0(M)}$ satisfying $\mu(B)\geq\gamma \mu(\widetilde{M})$ and $f(m_{x,y})=1$ for all $(x,y)\in B$.
    \item For every $\gamma\in(0,1)$, there exists a cyclically monotonic set $B$ of $\widetilde{M}$ with $\mu(B)\geq \gamma\mu(\widetilde{M})$.
\end{enumerate}
\end{proposition}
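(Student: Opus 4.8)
The plan is to leverage Corollary~\ref{cor: Lip_0(M)* characterisation} together with Lemma~\ref{lemma: Integer metric space cyclically monotonic}, treating the integer-distance hypothesis as the engine that upgrades $\gamma$-cyclical monotonicity to genuine (i.e. $1$-)cyclical monotonicity. First I would dispose of (ii)$\Leftrightarrow$(iii), which should be immediate from Proposition~\ref{prop:delta_CM_characterisation}. A set $B$ is cyclically monotonic precisely when it is $1$-cyclically monotonic, and by that proposition this holds if and only if there exists $f\in B_{\Lip_0(M)}$ with $f(m_{x,y})\geq 1$ for all $(x,y)\in B$; since $\|f\|\leq 1$ forces $f(m_{x,y})\leq 1$, the inequality is in fact an equality. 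Thus the set $B$ in (ii) and the cyclically monotonic set in (iii) are the same objects, and the two conditions coincide verbatim.

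Next, for (iii)$\Rightarrow$(i), I would observe that a cyclically monotonic set is in particular $\gamma$-cyclically monotonic for every $\gamma\in(0,1)$: this follows either directly from the definition, since decreasing $\gamma$ only increases each summand $d(x_i,y_{i+1})-\gamma d(x_i,y_i)$ and hence each minimum, or via Proposition~\ref{prop:delta_CM_characterisation} (the witnessing $f$ satisfies $f(m_{x,y})\geq 1\geq\gamma$). Hence hypothesis (iii) furnishes, for each $\gamma$, a $\gamma$-cyclically monotonic set $B$ with $\mu(B)\geq\gamma\mu(\widetilde{M})$, which is exactly condition (iii) of Corollary~\ref{cor: Lip_0(M)* characterisation}; that corollary then delivers $\|\Phi^*\mu\|=\|\mu\|$.

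The substance lies in (i)$\Rightarrow$(iii), where the integer-distance hypothesis enters. Fix $\gamma\in(0,1)$. I would introduce an auxiliary parameter $\gamma'$ close to $1$ and apply Corollary~\ref{cor: Lip_0(M)* characterisation} to obtain a $\gamma'$-cyclically monotonic set $A$ with $\mu(A)\geq\gamma'\mu(\widetilde{M})$. Lemma~\ref{lemma: Integer metric space cyclically monotonic} then extracts a cyclically monotonic subset $B\subseteq A$ satisfying $\mu(B)\geq\mu(A)-2n(1-\gamma')\mu(\widetilde{M})\geq\bigl(\gamma'-2n(1-\gamma')\bigr)\mu(\widetilde{M})$. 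It remains to force $\gamma'-2n(1-\gamma')\geq\gamma$, equivalently $\gamma'\geq(\gamma+2n)/(1+2n)$; since $\gamma<1$ the right-hand side is strictly below $1$, so any choice of $\gamma'$ in the interval $\bigl((\gamma+2n)/(1+2n),\,1\bigr)$ works and produces the required cyclically monotonic $B$ with $\mu(B)\geq\gamma\mu(\widetilde{M})$.

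The main obstacle is already packaged into Lemma~\ref{lemma: Integer metric space cyclically monotonic}, so the remaining work in (i)$\Rightarrow$(iii) is only the elementary arithmetic of selecting $\gamma'$, and I expect no further difficulty once the lemma is invoked. The single point demanding explicit care is the monotonicity in $\gamma$ exploited in (iii)$\Rightarrow$(i), namely that $1$-cyclical monotonicity implies $\gamma$-cyclical monotonicity for all smaller $\gamma$; this should be verified from the definition rather than assumed.
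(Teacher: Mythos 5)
Your proposal is correct and takes essentially the same route as the paper: (ii)$\Leftrightarrow$(iii) via Proposition~\ref{prop:delta_CM_characterisation}, (iii)$\Rightarrow$(i) via Corollary~\ref{cor: Lip_0(M)* characterisation}, and (i)$\Rightarrow$(iii) by invoking Corollary~\ref{cor: Lip_0(M)* characterisation} with an auxiliary parameter close to $1$ and then extracting a cyclically monotonic subset via Lemma~\ref{lemma: Integer metric space cyclically monotonic}. The paper's choice $\widetilde{\gamma}=\frac{2n+\gamma}{2n+1}$ is precisely the left endpoint of your admissible interval $\bigl((\gamma+2n)/(1+2n),\,1\bigr)$, so the arithmetic coincides.
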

\begin{proof}
    (ii)$\Leftrightarrow$(iii) follows directly from Proposition~\ref{prop:delta_CM_characterisation}.

    (iii)$\Rightarrow$(i) is clear from Corollary~\ref{cor: Lip_0(M)* characterisation} (iii)$\Rightarrow$(i).
    
    (i)$\Rightarrow$(iii). Let $\gamma\in(0,1)$ and let $\widetilde{\gamma}=\frac{2n+\gamma}{2n+1}$. By Corollary~\ref{cor: Lip_0(M)* characterisation} (i)$\Rightarrow$(iii), there exists a $\widetilde{\gamma}$-cyclically monotonic subset $A$ of $\widetilde{M}$ with $\mu(A)\geq \widetilde{\gamma} \mu(\widetilde{M})$. By Lemma~\ref{lemma: Integer metric space cyclically monotonic}, there exists a cyclically monotonic subset $B$ of $A$ satisfying
    \begin{align*}
    \mu(B)&\geq \mu(A)-2n(1-\widetilde{\gamma})\mu(\widetilde{M}) \geq \big(\widetilde{\gamma}-2n(1-\widetilde{\gamma})\big)\mu(\widetilde{M}) \\
    &= \big((2n+1)\widetilde{\gamma}-2n\big)\mu(\widetilde{M})=\gamma\mu(\widetilde{M}).
    \end{align*}
    
\end{proof}

\section{\texorpdfstring{Characterisations of LD$2$P and SD$2$P for spaces of Lipschitz functions.}{Characterisation of diameter 2 properties in Lip0(M)}}

In this section we characterise the LD$2$P and the SD$2$P for the spaces of Lipschitz functions. The following lemma is useful for working with slices of such Banach spaces.

\begin{lemma}\label{lem:f in slice implies big A}
Let $\mu\in S_\ba$ be optimal, let $\alpha>0$, let $f\in S(\Phi^*\mu,\alpha^2)$, and let 
    \[
    A=\bigl\{(x,y)\in \widetilde{M}\colon f(m_{x,y})\geq 1-\alpha\bigr\}.
    \]
    Then $\mu(A)\geq 1-\alpha$.  
\end{lemma}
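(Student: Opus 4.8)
The plan is to split the integral $(\Phi^*\mu)(f)=\int_{\widetilde M}\tilde f\,d\mu$ over $A$ and its complement, and then exploit the two available pointwise bounds on $\tilde f$. First I would record the standing facts. Since $\mu$ is optimal with $\mu\in S_\ba$, it is positive with $\mu(\widetilde M)=\|\mu\|=1$, and the integral representation $(\Phi^*\mu)(f)=\int_{\widetilde M}\tilde f\,d\mu$ applies; here, by the definition of the de Leeuw map, $\tilde f(x,y)=\tfrac{f(x)-f(y)}{d(x,y)}=f(m_{x,y})$. The membership $f\in S(\Phi^*\mu,\alpha^2)$ means precisely that $\int_{\widetilde M}\tilde f\,d\mu>1-\alpha^2$, and $f\in B_{\Lip_0(M)}$ forces $\tilde f\le 1$ everywhere on $\widetilde M$.

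Next I would apply the two estimates to the split integral. On $A$ we use only $\tilde f\le 1$, which (since $\mu\ge 0$) gives $\int_A\tilde f\,d\mu\le\mu(A)$. On $\widetilde M\setminus A$ the very definition of $A$ yields $\tilde f<1-\alpha$, whence $\int_{\widetilde M\setminus A}\tilde f\,d\mu\le(1-\alpha)\,\mu(\widetilde M\setminus A)=(1-\alpha)\bigl(1-\mu(A)\bigr)$. Writing $t=\mu(A)$ and combining these with the slice inequality gives $1-\alpha^2<t+(1-\alpha)(1-t)=(1-\alpha)+\alpha t$, which rearranges to $\alpha(1-\alpha)<\alpha t$, i.e. $\mu(A)>1-\alpha$. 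In particular $\mu(A)\ge 1-\alpha$, as claimed.

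There is essentially no serious obstacle here; the entire content is this elementary one-line estimate. The only points that must be handled with a little care are that positivity of $\mu$ is genuinely used in passing from the pointwise bounds on $\tilde f$ to the integral bounds on both pieces, and that the strict inequality coming from the slice condition is what produces $\mu(A)>1-\alpha$ (from which the stated nonstrict conclusion is immediate).
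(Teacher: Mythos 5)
Your proof is correct and follows essentially the same route as the paper's: both split $(\Phi^*\mu)(f)=\int_A\tilde f\,d\mu+\int_{\widetilde M\setminus A}\tilde f\,d\mu$, bound the two pieces by $\mu(A)$ and $(1-\alpha)\bigl(1-\mu(A)\bigr)$ using positivity of $\mu$, and rearrange $1-\alpha^2\le 1-\alpha+\alpha\mu(A)$ to obtain $\mu(A)\ge 1-\alpha$. Your observation that the strict slice inequality in fact yields $\mu(A)>1-\alpha$ is a harmless (correct) refinement of the stated conclusion.
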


\begin{proof}
It suffices to note that
    \begin{align*}
     1-\alpha^2\leq (\Phi^*\mu)(f) &= \int_A \tilde{f} d\mu +\int_{\widetilde{M}\setminus A} \tilde{f} d\mu\leq \mu(A)+(1-\alpha)\mu(\widetilde{M}\setminus A) \\
     &= \mu(A)+(1-\alpha)\big(1-\mu(A)\big)=1-\alpha+\alpha\mu(A).
    \end{align*}
\end{proof}

We are now ready to characterise the LD$2$P for the spaces of Lipschitz functions.

\begin{proposition}\label{prop: LD2P delta-CM characterisation}
    The space $\Lip_0(M)$ has the LD$2$P if and only if for all optimal $\mu\in S_\ba$ and $\gamma\in (0,1)$, there exist a subset $A$ of $\widetilde{M}$ and $u,v\in M$ with $u\neq v$ such that $\mu(A)\geq \gamma$ and the sets $A\cup \{(u,v)\}$ and $A\cup \{(v,u)\}$ are both $\gamma$-cyclically monotonic.
\end{proposition}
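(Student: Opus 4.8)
The plan is to read the diameter of a slice $S(\Phi^*\mu,\alpha)$ through the combinatorial lens of $\gamma$-cyclical monotonicity, using Proposition~\ref{prop:delta_CM_characterisation} as a dictionary between functions $f\in B_{\Lip_0(M)}$ with $\tilde f\geq\gamma$ on a set and $\gamma$-cyclically monotonic sets, and Proposition~\ref{prop: Lip_0(M)* positive measures} to represent every norm-one functional as $\Phi^*\mu$ for an optimal $\mu\in S_\ba$. This reduces the LD$2$P to a statement about slices of the form $S(\Phi^*\mu,\alpha)$ with $\mu$ optimal, matching the quantification in the proposition.

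\emph{Sufficiency.} Suppose the stated condition holds and fix a slice $S(F,\alpha)$ with $\|F\|=1$; write $F=\Phi^*\mu$ with $\mu\in S_\ba$ optimal, so $\mu(\widetilde{M})=1$. Given $\varepsilon>0$, I would choose $\gamma\in(0,1)$ so close to $1$ that $\gamma^2+\gamma-1>1-\alpha$ and $2\gamma>2-\varepsilon$, then apply the hypothesis to obtain $A\subseteq\widetilde{M}$ and $u\neq v$ with $\mu(A)\geq\gamma$ and both $A\cup\{(u,v)\}$ and $A\cup\{(v,u)\}$ being $\gamma$-cyclically monotonic. Proposition~\ref{prop:delta_CM_characterisation} then produces $f,g\in B_{\Lip_0(M)}$ with $\tilde f\geq\gamma$ on $A\cup\{(u,v)\}$ and $\tilde g\geq\gamma$ on $A\cup\{(v,u)\}$. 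Splitting the integral over $A$ and its complement gives $F(f)=\int_{\widetilde{M}}\tilde f\,d\mu\geq\gamma\mu(A)-(1-\mu(A))\geq\gamma^2+\gamma-1>1-\alpha$, and likewise for $g$, so $f,g\in S(F,\alpha)$; meanwhile $\tilde g(u,v)=-\tilde g(v,u)\leq-\gamma$, whence $\|f-g\|\geq\tilde f(u,v)-\tilde g(u,v)\geq 2\gamma>2-\varepsilon$. As $\varepsilon$ is arbitrary, the slice has diameter $2$.

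\emph{Necessity.} Conversely, assume $\Lip_0(M)$ has the LD$2$P and fix an optimal $\mu\in S_\ba$ and $\gamma\in(0,1)$. I would set $\beta=\varepsilon=(1-\gamma)/2$, so that $1-\beta\geq\gamma$, $1-\varepsilon\geq\gamma$, and $1-2\beta=\gamma$, and consider the genuine slice $S(\Phi^*\mu,\beta^2)$ (here $\|\Phi^*\mu\|=\|\mu\|=1$ since $\mu$ is optimal). The LD$2$P gives $f,g\in S(\Phi^*\mu,\beta^2)$ with $\|f-g\|>2-\varepsilon$; since $\tilde f=\Phi f$ and $\tilde g=\Phi g$ have sup-norm at most $1$, there is a pair $(u,v)\in\widetilde{M}$ (after possibly exchanging $u$ and $v$) with $\tilde f(u,v)-\tilde g(u,v)>2-\varepsilon$, forcing $\tilde f(u,v)>1-\varepsilon\geq\gamma$ and $\tilde g(v,u)=-\tilde g(u,v)>1-\varepsilon\geq\gamma$. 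Lemma~\ref{lem:f in slice implies big A} applied with $\alpha=\beta$ to $f$ and to $g$ shows that the level sets $A_f=\{(x,y)\in\widetilde{M}\colon\tilde f(x,y)\geq1-\beta\}$ and $A_g=\{(x,y)\in\widetilde{M}\colon\tilde g(x,y)\geq1-\beta\}$ satisfy $\mu(A_f),\mu(A_g)\geq1-\beta$, so $A=A_f\cap A_g$ has $\mu(A)\geq1-2\beta=\gamma$. On $A\cup\{(u,v)\}$ we have $\tilde f\geq\gamma$ and on $A\cup\{(v,u)\}$ we have $\tilde g\geq\gamma$, so by Proposition~\ref{prop:delta_CM_characterisation} both sets are $\gamma$-cyclically monotonic, which is exactly the required conclusion.

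\emph{Main obstacle.} The substantive idea lies in the necessity direction: that a nearly antipodal pair in a slice of $B_{\Lip_0(M)}$ is witnessed by a single molecule $(u,v)$, and that intersecting the two large level sets from Lemma~\ref{lem:f in slice implies big A} and then adjoining $(u,v)$ to one copy and $(v,u)$ to the other preserves enough structure for Proposition~\ref{prop:delta_CM_characterisation} to apply. The only delicate point is calibrating the slice width $\beta^2$ and the separation $\varepsilon$ together so that $1-2\beta\geq\gamma$ and $1-\varepsilon\geq\gamma$ hold simultaneously; once the parameters are fixed, the two cited results remove any remaining analytic difficulty.
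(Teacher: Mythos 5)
Your proof is correct and follows essentially the same route as the paper: both directions use Proposition~\ref{prop: Lip_0(M)* positive measures} to represent functionals by optimal measures, Proposition~\ref{prop:delta_CM_characterisation} as the dictionary between $\gamma$-cyclically monotonic sets and norm-one Lipschitz functions, and Lemma~\ref{lem:f in slice implies big A} with the same parameter calibration $\beta=(1-\gamma)/2$ and slice width $\beta^2$ in the necessity direction. The only cosmetic difference is that the paper deduces $\gamma$-cyclical monotonicity of $A\cup\{(u,v)\}$ and $A\cup\{(v,u)\}$ by noting they are subsets of the $\gamma$-cyclically monotonic level sets $B$ and $C$, whereas you verify $\tilde f\geq\gamma$, $\tilde g\geq\gamma$ on the augmented sets directly; these are the same argument.
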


\begin{proof}
   ($\Leftarrow$) Let $F\in S_{\Lip_0(M)^*}$ and let $\alpha>0$. It suffices to show that for any $\gamma\in (0,1)$, there exist $f,g\in S(F,\alpha)$ satisfying $\|f-g\|\geq 2\gamma$. Let $\gamma\in (0,1)$ be such that $\gamma^2 \geq 1-\frac{\alpha}{2}$. By Proposition~\ref{prop: Lip_0(M)* positive measures}, there exists an optimal $\mu\in S_\ba$ with $\Phi^* \mu = F$. Therefore, there exist a subset $A$ of $\widetilde{M}$ and $u,v\in M$ with $u\neq v$ so that $\mu(A)\geq \gamma$ and the sets $A\cup \{(u,v)\}$ and $A\cup \{(v,u)\}$ are both $\gamma$-cyclically monotonic.
   By Proposition~\ref{prop:delta_CM_characterisation}, there exist $f,g\in B_{\Lip_0(M)}$ such that $f(m_{x,y})\geq \gamma$ and $g(m_{x,y})\geq \gamma$ for all $(x,y)\in A$, and $f(m_{u,v})\geq \gamma$ and $g(m_{v,u})\geq \gamma$.
    Therefore,     
    \[
        \|f-g\|\geq (f-g)(m_{u,v})\geq 2\gamma.
    \]
    Note that $f\in S(F,\alpha)$ because
    \[
    F(f)=\int_{\widetilde{M}}\tilde{f}d\mu = \int_{A}\tilde{f}d\mu+\int_{\widetilde{M}\setminus A} \tilde{f}d\mu \geq \gamma^2-(1-\gamma)> 2\gamma^2-1 \geq 1-\alpha.
    \]
    Similarly, we get that $g\in S(F,\alpha)$.

    ($\Rightarrow$) Now assume that the space $\Lip_0(M)$ has the LD$2$P. Let $\mu\in S_\ba$ be optimal and let $\gamma\in(0,1)$. Set $\alpha=\frac{1-\gamma}{2}$. Since $\Lip_0(M)$ has the LD$2$P, there exist $f,g\in S(\Phi^* \mu,\alpha^2)$ with $\|f-g\| > 2-\alpha$. Therefore, there exist $u,v\in M$ so that
    \[
    f(m_{u,v})+g(m_{v,u}) = (f-g)(m_{u,v})\geq 2-\alpha.
    \]
    Thus, $f(m_{u,v})\geq 1-\alpha$ and $g(m_{v,u})\geq 1-\alpha$. Set $A = B\cap C$, where
    \[
        B=\big\{(x,y)\in \widetilde{M}\colon f(m_{x,y})\geq  1-\alpha\big\} \quad\text{and}\quad  C=\big\{(x,y)\in \widetilde{M}\colon g(m_{x,y})\geq  1-\alpha\big\}.
    \]
    By Proposition~\ref{prop:delta_CM_characterisation}, $B$ and $C$ are $\gamma$-cyclically monotonic because $\gamma<1-\alpha$. Note that $A\cup \{(u,v)\}$ and $A\cup \{(v,u)\}$ are $\gamma$-cyclically monotonic because they are subsets of $\gamma$-cyclically monotonic sets $B$ and $C$, respectively. By Lemma~\ref{lem:f in slice implies big A}, we have $\mu(B)\geq 1-\alpha$ and $\mu(C)\geq 1-\alpha$, and hence,
    \[
    \mu(A)\geq 1-2\alpha = \gamma.
    \]
    
\end{proof}

The following lemma provides a tool to check whether the set $A\cup \{(u,v)\}$ is $\gamma$-cyclically monotonic, given a $\gamma$-cyclically monotonic subset $A$ of $\widetilde{M}$ and $(u,v)\in \widetilde{M}$.

\begin{lemma}\label{lem: A cup (u,v) delta-CM}
    Let $\gamma\in(0,1]$, let $A$ be a subset of $\widetilde{M}$, and let $u,v\in M$ with $u\neq v$. The set $A\cup \{(u,v)\}$ is $\gamma$-cyclically monotonic if and only if there exists $f\in B_{\Lip_0(M)}$ satisfying for all $(x,y)\in A$, $f(m_{x,y})\geq \gamma$, 
    and for all $x,y\in \pi(A)$,
    \begin{equation*}
         f(y)-f(x)+ \gamma d(u,v)\leq d(x,u)+d(y,v).
    \end{equation*}
\end{lemma}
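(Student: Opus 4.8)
The plan is to characterise when $A \cup \{(u,v)\}$ is $\gamma$-cyclically monotonic by applying Proposition~\ref{prop:delta_CM_characterisation} to the enlarged set and then disentangling the contribution of the new pair $(u,v)$ from the contributions of the pairs already in $A$. By Proposition~\ref{prop:delta_CM_characterisation}, the set $A\cup\{(u,v)\}$ is $\gamma$-cyclically monotonic if and only if there exists $f\in B_{\Lip_0(M)}$ with $f(m_{x,y})\ge\gamma$ for every $(x,y)\in A\cup\{(u,v)\}$. The forward direction ($A\cup\{(u,v)\}$ is $\gamma$-cyclically monotonic $\Rightarrow$ the stated condition) should then follow by extracting the asserted inequality from the Lipschitz bound $f(y)-f(x)\le d(x,y)$ combined with the single constraint $f(u)-f(v)\ge\gamma d(u,v)$.

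More precisely, for the forward implication I would take the $f$ produced by Proposition~\ref{prop:delta_CM_characterisation}, which automatically satisfies $f(m_{x,y})\ge\gamma$ for all $(x,y)\in A$; it remains to derive the displayed inequality for $x,y\in\pi(A)$. The key estimate is to chain the $1$-Lipschitz bounds through the pair $(u,v)$: writing
\begin{align*}
    f(y)-f(x)+\gamma d(u,v) &= \big(f(y)-f(v)\big)+\big(f(v)-f(u)+\gamma d(u,v)\big)+\big(f(u)-f(x)\big)\\
    &\le d(y,v)+0+d(u,x),
\end{align*}
where the middle term is nonpositive precisely because $f(m_{u,v})\ge\gamma$ gives $f(u)-f(v)\ge\gamma d(u,v)$, i.e. $f(v)-f(u)+\gamma d(u,v)\le 0$. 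This is exactly the claimed bound.

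For the converse I would start from an $f\in B_{\Lip_0(M)}$ satisfying $f(m_{x,y})\ge\gamma$ on $A$ together with the displayed family of inequalities, and aim to produce (possibly after modifying $f$) a function in $B_{\Lip_0(M)}$ that additionally satisfies $f(m_{u,v})\ge\gamma$, which by Proposition~\ref{prop:delta_CM_characterisation} yields the $\gamma$-cyclical monotonicity of $A\cup\{(u,v)\}$. The natural candidate is to set the value at $u$ as high as the Lipschitz and $\gamma$-constraints permit relative to $v$ and the points of $\pi(A)$; concretely, one expects to define a new function $g$ agreeing with $f$ on $\pi(A)\cup\{v\}$ (or a McShane-type extension off this set) and to choose $g(u)$ so that $g(u)-g(v)\ge\gamma d(u,v)$ while preserving the $1$-Lipschitz property against every $f$-value on $\pi(A)\cup\{v\}$. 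The displayed inequality is exactly what guarantees such a choice of $g(u)$ is consistent: it says $\gamma d(u,v)-\big(f(y)-f(x)\big)\le d(x,u)+d(y,v)$, which rearranges into compatibility of the required lower bound $g(v)+\gamma d(u,v)$ on $g(u)$ with the upper Lipschitz bounds $f(x)+d(x,u)$ coming from points of $\pi(A)$, after accounting for how $f(v)$ relates to the $f$-values on $\pi(A)$.

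The main obstacle I anticipate is the bookkeeping in the converse: one must verify that a single value $g(u)$ can simultaneously respect the lower bound forcing $g(m_{u,v})\ge\gamma$ and all the upper Lipschitz bounds $g(u)\le f(x)+d(x,u)$ and $g(u)\le f(v)+d(v,u)$ for $x\in\pi(A)$, and then extend $g$ to all of $M$ as a $1$-Lipschitz function (a McShane extension, as used in the proof of Proposition~\ref{prop:delta_CM_characterisation}) without destroying any constraint. Establishing that the gap between the required lower bound and the smallest upper bound is nonnegative reduces, via the triangle inequality and the relation between $f(v)$ and the values $f(x)$ on $\pi(A)$, exactly to the displayed inequality; making this reduction precise — in particular tracking whether $v$ itself lies in $\pi(A)$ and handling the constraint coming from $v$ — is the delicate bit, but it is routine once the chaining identity in the forward direction is in hand.
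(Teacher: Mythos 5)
Your forward direction is correct and is essentially identical to the paper's argument: both amount to chaining the $1$-Lipschitz bounds $f(u)-f(x)\le d(x,u)$ and $f(y)-f(v)\le d(y,v)$ with the single constraint $f(u)-f(v)\ge\gamma d(u,v)$.

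The converse, however, contains a genuine gap, not just delicate bookkeeping: your plan fixes $g=f$ on $\pi(A)\cup\{v\}$, but when $v\notin\pi(A)$ the hypothesis of the lemma places no constraint at all on the value $f(v)$, and preserving that value can make your requirements on $g(u)$ inconsistent. Concretely, take $M=\{0,a,u,v\}$ with $d(0,a)=d(0,u)=d(a,u)=1$, $d(0,v)=d(a,v)=5$, $d(u,v)=4$ (all triangle inequalities hold); let $\gamma=1$, $A=\{(a,0)\}$, so $\pi(A)=\{0,a\}$, and let $f(0)=f(u)=f(v)=0$, $f(a)=1$. Then $f\in B_{\Lip_0(M)}$, $f(m_{a,0})=1\ge\gamma$, and the displayed inequality holds for all $x,y\in\pi(A)$, since its left-hand side is at most $1+4=5$ while its right-hand side equals $1+5=6$; thus the lemma's hypothesis is satisfied. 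Yet your constraints demand simultaneously $g(u)\ge f(v)+\gamma d(u,v)=4$ and $g(u)\le f(0)+d(0,u)=1$, which is impossible. The missing idea is that the value at $v$ must be abandoned rather than preserved: the paper's proof lets $g$ agree with $f$ only on $\pi(A)$, sets $g(u)=\inf_{x\in\pi(A)}\bigl(f(x)+d(x,u)\bigr)$, and then extends by the lower McShane formula $g(y)=\sup_{x\in\pi(A)\cup\{u\}}\bigl(g(x)-d(x,y)\bigr)$, so that $g(v)$ becomes essentially $\max\bigl\{\sup_{y\in\pi(A)}\bigl(f(y)-d(y,v)\bigr),\,g(u)-d(u,v)\bigr\}$; in the example above this gives $g(v)=-3$, not $f(v)=0$, and indeed $g(m_{u,v})=1$. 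With that choice of $g(v)$, the required bound $g(u)-g(v)\ge\gamma d(u,v)$ is exactly the statement $\inf_{x,y\in\pi(A)}\bigl(f(x)+d(x,u)-f(y)+d(y,v)\bigr)\ge\gamma d(u,v)$, i.e.\ exactly the displayed inequality; with $g(v)=f(v)$ it is a strictly stronger and generally false requirement, so no refinement of your reduction can close the argument as stated.
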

\begin{proof}
    Assume first that $A\cup \{(u,v)\}$ is $\gamma$-cyclically monotonic. By Proposition~\ref{prop:delta_CM_characterisation}, there exists $f\in B_{\Lip_0(M)}$ satisfying $f(m_{x,y})\geq \gamma$ for all $(x,y)\in A\cup \{(u,v)\}$. For all $x,y\in \pi(A)$, we have
    \[
    \gamma d(u,v)\leq f(u)-f(v)\leq  f(x)+d(x,u)-f(y)+d(y,v).
    \]

    Now assume that there exists $f\in B_{\Lip_0(M)}$ satisfying $f(m_{x,y})\geq \gamma$ for all $(x,y)\in A$, and for all $x,y\in \pi(A)$,
    \begin{equation*}
         f(y)-f(x)+ \gamma d(u,v)\leq d(x,u)+d(y,v).
    \end{equation*}
    We show that the set $A\cup \{(u,v)\}$ is $\gamma$-cyclically monotonic. By Proposition~\ref{prop:delta_CM_characterisation}, it suffices to find $g\in B_{\Lip_0(M)}$ satisfying $g(m_{x,y})\geq \gamma$ for all $(x,y)\in A\cup \{(u,v)\}$.
    Without loss of generality, we may and do assume that $0\in \pi(A)$. Define $g\in B_{\Lip_0(M)}$ by setting $g|_{\pi(A)}=f|_{\pi(A)}$ and $g(u)=\inf_{x\in \pi(A)} (f(x)+d(x,u))$, and then
    \begin{align*}
            \quad g(y)=\sup_{x\in \pi(A)\cup \{u\}} \big(g(x)-d(x,y)\big) \qquad \text{for all $y\in M\setminus \big(\pi(A)\cup \{u\}\big)$.}
    \end{align*}
    Clearly $g(m_{x,y})=f(m_{x,y})\geq \gamma$ for all $(x,y)\in A$. Note that $g(m_{u,v})\geq \gamma$. Indeed, if $g(v)=g(u)-d(u,v)$, then this is clear; otherwise, 
    \[
    g(u)-g(v)=\inf_{x,y\in \pi(A)} \big(f(x)+d(x,u)-f(y)+d(y,v)\big)\geq \gamma d(u,v).
    \]
\end{proof}

From Proposition~\ref{prop: LD2P delta-CM characterisation} and Lemma~\ref{lem: A cup (u,v) delta-CM}, we immediately get another characterisation of the LD$2$P for the spaces of Lipschitz functions.
\begin{proposition}\label{prop: LD2P characterisation}
    The space $\Lip_0(M)$ has the LD$2$P if and only if for all optimal $\mu\in S_\ba$ and $\gamma\in(0,1)$, there exist a subset $A$ of $\widetilde{M}$ with $\mu(A)\geq \gamma$, functionals $f,g\in B_{\Lip_0(M)}$, and elements $u,v\in M$ with $u\neq v$ satisfying for all $(x,y)\in A$, $f(m_{x,y})\geq \gamma$ and $g(m_{x,y})\geq \gamma$, and for all $x,y\in \pi(A)$,
    \begin{equation*}\label{eq: LD2P-fg}
        \max\big\{f(x)-f(y),g(y)-g(x)\big\}+\gamma d(u,v)\leq d(x,u)+d(y,v).
    \end{equation*}
\end{proposition}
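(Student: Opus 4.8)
The plan is to obtain the stated equivalence by feeding the characterisation of $\gamma$-cyclical monotonicity of an augmented set from Lemma~\ref{lem: A cup (u,v) delta-CM} into the criterion of Proposition~\ref{prop: LD2P delta-CM characterisation}. The latter already reduces the LD$2$P to the existence, for each optimal $\mu\in S_\ba$ and each $\gamma\in(0,1)$, of a set $A$ with $\mu(A)\geq\gamma$ for which both $A\cup\{(u,v)\}$ and $A\cup\{(v,u)\}$ are $\gamma$-cyclically monotonic. Since the condition $\mu(A)\geq\gamma$ is untouched, the whole task is to rewrite the two monotonicity conditions in the functional form of the proposition, and this is a purely translational argument with no new analytic content.

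For the forward direction I would start from the data $A$, $u$, $v$ produced by Proposition~\ref{prop: LD2P delta-CM characterisation}. Applying Lemma~\ref{lem: A cup (u,v) delta-CM} to the set $A\cup\{(u,v)\}$ (so that the Lemma's distinguished pair is exactly $(u,v)$) yields a function $g\in B_{\Lip_0(M)}$ with $g(m_{x,y})\geq\gamma$ on $A$ and $g(y)-g(x)+\gamma d(u,v)\leq d(x,u)+d(y,v)$ for all $x,y\in\pi(A)$; this is precisely the $g$-half of the required $\max$-inequality. Applying the Lemma instead to $A\cup\{(v,u)\}$ — where the Lemma's distinguished pair is $(v,u)$, i.e. its roles of $u$ and $v$ are interchanged — yields $f\in B_{\Lip_0(M)}$ with $f(m_{x,y})\geq\gamma$ on $A$ and $f(y)-f(x)+\gamma d(u,v)\leq d(x,v)+d(y,u)$ for all $x,y\in\pi(A)$. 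Renaming the dummy variables $x\leftrightarrow y$ converts this into $f(x)-f(y)+\gamma d(u,v)\leq d(x,u)+d(y,v)$, the $f$-half. Combining the two inequalities into a single $\max$ gives exactly the condition of Proposition~\ref{prop: LD2P characterisation}.

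The reverse direction runs the same steps backwards: given $A$, $f$, $g$, $u$, $v$ as in Proposition~\ref{prop: LD2P characterisation}, the $g$-inequality is immediately the hypothesis of Lemma~\ref{lem: A cup (u,v) delta-CM} for the pair $(u,v)$, so $A\cup\{(u,v)\}$ is $\gamma$-cyclically monotonic; and after the same $x\leftrightarrow y$ relabelling the $f$-inequality becomes the hypothesis of the Lemma for the pair $(v,u)$, so $A\cup\{(v,u)\}$ is $\gamma$-cyclically monotonic. Since $\mu(A)\geq\gamma$ is assumed, Proposition~\ref{prop: LD2P delta-CM characterisation} then delivers the LD$2$P.

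The only genuine subtlety — the step I would flag as the main obstacle — is the bookkeeping of which extracted function is paired with which augmentation: the function coming from the $(v,u)$-augmentation must be named $f$ and the one from the $(u,v)$-augmentation named $g$, and one must notice that the symmetric right-hand side $d(x,u)+d(y,v)$ appearing in the $\max$ is recovered from the Lemma's asymmetric $d(x,v)+d(y,u)$ simply by interchanging the universally quantified variables. Once this correspondence is pinned down, the proof is an immediate concatenation of the two earlier results, exactly as asserted.
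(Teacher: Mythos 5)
Your proof is correct and takes exactly the paper's route: the paper obtains Proposition~\ref{prop: LD2P characterisation} precisely by combining Proposition~\ref{prop: LD2P delta-CM characterisation} with Lemma~\ref{lem: A cup (u,v) delta-CM}, stating that the result follows immediately. Your bookkeeping — pairing $g$ with the augmentation $A\cup\{(u,v)\}$, pairing $f$ with $A\cup\{(v,u)\}$, and recovering the symmetric right-hand side $d(x,u)+d(y,v)$ via the $x\leftrightarrow y$ relabelling — correctly supplies the details the paper leaves implicit.
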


We now characterise the SD$2$P for the spaces of Lipschitz functions in the spirit of Proposition~\ref{prop: LD2P delta-CM characterisation}. It was shown in \cite[Example~3.1]{MR4849357} that the $w^*$-SD$2$P and the SD$2$P are not equivalent properties for the spaces of Lipschitz functions. Therefore, our characterisation is not equivalent to the one given for the $w^*$-SD$2$P in \cite[Theorem~3.1]{MR3803112}.

\begin{proposition}\label{prop: SD2P delta-CM characterisation}
    The space $\Lip_0(M)$ has the SD$2$P if and only if for all optimal $\mu_1,\dotsc, \mu_n\in S_\ba$ and $\gamma\in(0,1)$, there exist subsets $A_1,\dotsc, A_n$ of $\widetilde{M}$ and $u,v\in M$ with $u\neq v$ such that for all $i\in \{1,\dotsc, n\}$, $\mu_i(A_i)\geq \gamma$ and the sets $A_i\cup \{(u,v)\}$ and $A_i\cup \{(v,u)\}$ are $\gamma$-cyclically monotonic.
\end{proposition}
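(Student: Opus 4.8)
The plan is to adapt the proof of Proposition~\ref{prop: LD2P delta-CM characterisation} from a single slice to a convex combination of slices, the essential new feature being that one pair $(u,v)$ must serve all the measures $\mu_1,\dotsc,\mu_n$ simultaneously.

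For the sufficiency direction ($\Leftarrow$), I would start with arbitrary slices $S_1,\dotsc,S_n$ with $S_i=S(F_i,\beta_i)$ and weights $\lambda_1,\dotsc,\lambda_n$, and reduce to a common width by setting $\alpha=\min_i\beta_i>0$, so that $S(F_i,\alpha)\subseteq S_i$. Fixing $\gamma\in(0,1)$ with $\gamma^2\geq 1-\alpha/2$ and, via Proposition~\ref{prop: Lip_0(M)* positive measures}, choosing optimal $\mu_i\in S_\ba$ with $\Phi^*\mu_i=F_i$, I apply the hypothesis to obtain sets $A_i$ and a common pair $(u,v)$. Proposition~\ref{prop:delta_CM_characterisation} then furnishes $f_i,g_i\in B_{\Lip_0(M)}$ that are $\geq\gamma$ on $A_i$, with $f_i(m_{u,v})\geq\gamma$ and $g_i(m_{v,u})\geq\gamma$. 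The computation in Proposition~\ref{prop: LD2P delta-CM characterisation} shows $f_i,g_i\in S(F_i,\alpha)\subseteq S_i$, so $f=\sum_i\lambda_i f_i$ and $g=\sum_i\lambda_i g_i$ lie in $\sum_i\lambda_i S_i$. Using $m_{v,u}=-m_{u,v}$, I would compute $(f-g)(m_{u,v})=\sum_i\lambda_i\bigl(f_i(m_{u,v})+g_i(m_{v,u})\bigr)\geq 2\gamma$, whence $\|f-g\|\geq 2\gamma$; letting $\gamma\to 1$ gives diameter $2$.

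For the necessity direction ($\Rightarrow$), I would set $F_i=\Phi^*\mu_i$, $\alpha=\tfrac{1-\gamma}{2}$, and consider the slices $S_i=S(F_i,\alpha^2)$. Applying the SD$2$P to the equal-weight convex combination $\tfrac1n\sum_i S_i$ (the statement to be proved makes no reference to weights, so any convenient choice suffices) with tolerance $\varepsilon=\alpha/n$, I obtain $f_i,g_i\in S_i$ with $\bigl\|\tfrac1n\sum_i(f_i-g_i)\bigr\|>2-\varepsilon$. Since the norm is a supremum over de Leeuw evaluations, there is a single pair $(u,v)$ with $\tfrac1n\sum_i(f_i-g_i)(m_{u,v})>2-\varepsilon$ (after possibly swapping $u,v$). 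Here comes the key averaging step: as each $(f_i-g_i)(m_{u,v})\leq 2$, an average exceeding $2-\varepsilon$ forces every term to exceed $2-n\varepsilon=2-\alpha$, hence $f_i(m_{u,v})\geq 1-\alpha$ and $g_i(m_{v,u})\geq 1-\alpha$ for all $i$. This is precisely where the simultaneity of $(u,v)$ is obtained, and it is the step I expect to carry the main weight of the argument, since it is exactly what distinguishes the SD$2$P from the LD$2$P case.

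Finally, mirroring the end of Proposition~\ref{prop: LD2P delta-CM characterisation}, I would set $B_i=\{(x,y)\colon f_i(m_{x,y})\geq 1-\alpha\}$, $C_i=\{(x,y)\colon g_i(m_{x,y})\geq 1-\alpha\}$, and $A_i=B_i\cap C_i$. Since $1-\alpha>\gamma$, Proposition~\ref{prop:delta_CM_characterisation} makes $B_i$ and $C_i$ both $\gamma$-cyclically monotonic; as $(u,v)\in B_i$ and $(v,u)\in C_i$, the sets $A_i\cup\{(u,v)\}\subseteq B_i$ and $A_i\cup\{(v,u)\}\subseteq C_i$ inherit $\gamma$-cyclical monotonicity. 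Lemma~\ref{lem:f in slice implies big A} gives $\mu_i(B_i)\geq 1-\alpha$ and $\mu_i(C_i)\geq 1-\alpha$, so $\mu_i(A_i)\geq\mu_i(B_i)+\mu_i(C_i)-1\geq 1-2\alpha=\gamma$, completing the verification. The only genuinely new idea beyond the LD$2$P case is the averaging that pins down a common $(u,v)$; the remaining bookkeeping with the constants $\alpha$ and $\gamma$ is routine.
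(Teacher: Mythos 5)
Your proposal is correct and follows essentially the same route as the paper: the same reduction via Propositions~\ref{prop: Lip_0(M)* positive measures} and~\ref{prop:delta_CM_characterisation} in the sufficiency direction, and in the necessity direction the same equal-weight convex combination with tolerance $\alpha/n$, the same averaging step pinning down a common pair $(u,v)$ (the paper's inequality $(f_i-g_i)(m_{u,v})\geq n(2-\tfrac{\alpha}{n})-2(n-1)=2-\alpha$), and the same sets $B_i$, $C_i$, $A_i=B_i\cap C_i$ with Lemma~\ref{lem:f in slice implies big A}. The only cosmetic differences are that you make explicit the reduction to slices of common width and the inclusion--exclusion bound $\mu_i(A_i)\geq\mu_i(B_i)+\mu_i(C_i)-1$, both of which the paper leaves implicit.
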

\begin{proof}
($\Leftarrow$) Let $F_1,\dotsc, F_n\in S_{\Lip_0(M)^*}$, let $\alpha>0$, let $\lambda_1,\dotsc,\lambda_n\geq 0$ satisfy $\sum_{i=1}^n\lambda_i=1$, and let $C=\sum_{i=1}^n\lambda_i S(F_i,\alpha)$. It suffices to show that for any $\gamma\in (0,1)$, there exist $f,g\in C$ satisfying $\|f-g\|\geq 2\gamma$. Let $\gamma\in(0,1)$ be such that $\gamma^2\geq 1-\frac{\alpha}{2}$. By Proposition~\ref{prop: Lip_0(M)* positive measures}, there exist optimal $\mu_1,\dotsc,\mu_n\in S_\ba$ with $\Phi^* \mu_i = F_i$ for every $i\in\{1,\dotsc,n\}$. Therefore, there exist subsets $A_1,\dotsc,A_n$ of $\widetilde{M}$ and $u,v\in M$ with $u\neq v$ such that for all $i\in \{1, \dotsc, n\}$, $\mu_i(A_i)\geq \gamma$ and the sets $A_i\cup \{(u,v)\}$ and $A_i\cup \{(v,u)\}$ are $\gamma$-cyclically monotonic.

Fix $i\in \{1,\dotsc, n\}$. By Theorem~\ref{thm: Lip_0(M)* description in ba space}, there exist $f_i,g_i\in B_{\Lip_0(M)}$ such that $f_i(m_{x,y})\geq \gamma$ and $g_i(m_{x,y})\geq \gamma$ for all $(x,y)\in A_i$, and $f_i(m_{u,v})\geq \gamma$ and $g_i(m_{v,u})\geq \gamma$. 
    Note that $f_i\in S(F_i,\alpha)$ because
    \[
    F_i(f_i)=\int_{\widetilde{M}}\tilde{f_i}d\mu_i = \int_{A_i}\tilde{f_i}d\mu_i+\int_{\widetilde{M}\setminus A_i} \tilde{f_i}d\mu_i \geq \gamma^2-(1-\gamma)> 2\gamma^2-1 \geq 1-\alpha.
    \]
    Similarly, we get that $g_i\in S(F_i,\alpha)$. 
    
    Define $f=\sum_{i=1}^n \lambda_i f_i$ and $g=\sum_{i=1}^n \lambda_i g_i$. Then $f,g\in C$ and      
    \[
        \|f-g\|\geq \sum_{i=1}^n \lambda_i(f_i-g_i)(m_{u,v})\geq 2\gamma.
    \]
    
($\Rightarrow$)
    Now assume that $\Lip_0(M)$ has the SD$2$P. Let $\mu_1,\dotsc, \mu_{n}\in S_\ba$ be optimal and let $\gamma\in(0,1)$. Set $\alpha=\frac{1-\gamma}{2}$. Since $\Lip_0(M)$ has the SD$2$P, there exist $f_i,g_i\in S(\Phi^* \mu_i,\alpha^2)$, $i\in \{1,\ldots,n\}$, such that
    \[
    \Big\|\frac{1}{n}\sum_{i=1}^n (f_i-g_i)\Big\|>  2-\frac{\alpha}{n}.
    \]
    Let $u,v\in M$ with $u\neq v$ be such that 
    \[
        \frac{1}{n}\sum_{i=1}^n (f_i-g_i)(m_{u,v})\geq 2-\frac{\alpha}{n}.
    \]
    Fix $i\in \{1,\dotsc,n\}$. We have
    \[
    f_i(m_{u,v})+g_i(m_{v,u})=(f_i-g_i)(m_{u,v})\geq n\big(2-\frac{\alpha}{n}\big)-2(n-1)= 2-\alpha,
    \]
    and therefore, $f_i(m_{u,v})\geq 1-\alpha$ and $g_i(m_{v,u})\geq 1-\alpha$. Set $A_i = B_i\cap C_i$, where
    \[
        B_i=\big\{(x,y)\in \widetilde{M}\colon f_i(m_{x,y})\geq  1-\alpha\big\} \quad\text{and}\quad  C_i=\big\{(x,y)\in \widetilde{M}\colon g_i(m_{x,y})\geq  1-\alpha\big\}.
    \]
    By Proposition~\ref{prop:delta_CM_characterisation}, $B_i$ and $C_i$ are $\gamma$-cyclically monotonic because $\gamma<1-\alpha$. Note that $A_i\cup \{(u,v)\}$ and $A_i\cup \{(v,u)\}$ are $\gamma$-cyclically monotonic because they are subsets of $\gamma$-cyclically monotonic sets $B_i$ and $C_i$, respectively.
    By Lemma~\ref{lem:f in slice implies big A}, we have $\mu_i(B_i)\geq 1-\alpha$ and $\mu_i(C_i)\geq 1-\alpha$, and hence,
    \[
    \mu_i(A_i)\geq 1-2\alpha=\gamma.
    \]
\end{proof}

From Lemma~\ref{lem: A cup (u,v) delta-CM} and Proposition~\ref{prop: SD2P delta-CM characterisation}, we get another characterisation of the SD$2$P for the spaces of Lipschitz functions, in the spirit of Proposition~\ref{prop: LD2P characterisation}.

\begin{proposition}\label{prop: SD2P characterisation}
    The space $\Lip_0(M)$ has the SD$2$P if and only if for all optimal $\mu_1,\dotsc,\mu_n\in S_\ba$ and $\gamma\in(0,1)$, there exist subsets $A_1,\dotsc,A_n$ of $\widetilde{M}$ with $\mu_1(A_1)\geq \gamma,\dotsc,\mu_n(A_n)\geq \gamma$, functionals $f_1,g_1,\dotsc,f_n,g_n\in B_{\Lip_0(M)}$, and elements $u,v\in M$ with $u\neq v$ satisfying for all $i\in \{1,\dotsc,n\}$ and $(x,y)\in A_i$, $f_i(m_{x,y})\geq \gamma$ and $g_i(m_{x,y})\geq \gamma$,
    and for all $i\in \{1,\dotsc,n\}$ and $x,y\in \pi(A_i)$,
    \[
        \max\big\{f_i(x)-f_i(y),g_i(y)-g_i(x)\big\}+\gamma d(u,v)\leq d(x,u)+d(y,v).
    \]
\end{proposition}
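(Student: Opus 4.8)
The plan is to observe that Proposition~\ref{prop: SD2P delta-CM characterisation} and the statement to be proved share exactly the same outer quantifier structure---``for all optimal $\mu_1,\dotsc,\mu_n\in S_\ba$ and $\gamma\in(0,1)$, there exist subsets $A_1,\dotsc,A_n$ of $\widetilde M$ with $\mu_i(A_i)\geq\gamma$ and elements $u,v\in M$ with $u\neq v$ such that [an inner condition on each $A_i$]''---so it suffices to prove that the two inner conditions are equivalent. Concretely, I would fix $i$ and show that the requirement ``$A_i\cup\{(u,v)\}$ and $A_i\cup\{(v,u)\}$ are both $\gamma$-cyclically monotonic'' is equivalent to the existence of $f_i,g_i\in B_{\Lip_0(M)}$ satisfying the stated pointwise bounds, and then quantify over $i\in\{1,\dotsc,n\}$ and over the existential data $(A_i,u,v)$ verbatim.

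First I would apply Lemma~\ref{lem: A cup (u,v) delta-CM} to $A_i\cup\{(u,v)\}$. This produces a function---which I name $g_i$---in $B_{\Lip_0(M)}$ with $g_i(m_{x,y})\geq\gamma$ for all $(x,y)\in A_i$ and
\[
g_i(y)-g_i(x)+\gamma d(u,v)\leq d(x,u)+d(y,v)\qquad\text{for all }x,y\in\pi(A_i),
\]
which is exactly the $g_i$-half of the maximum in the target inequality. Next I would apply the same lemma to $A_i\cup\{(v,u)\}$, i.e.\ run Lemma~\ref{lem: A cup (u,v) delta-CM} with the pair $(u,v)$ replaced by $(v,u)$. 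This yields a function $f_i\in B_{\Lip_0(M)}$ with $f_i(m_{x,y})\geq\gamma$ on $A_i$ and
\[
f_i(y)-f_i(x)+\gamma d(v,u)\leq d(x,v)+d(y,u)\qquad\text{for all }x,y\in\pi(A_i).
\]

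Interchanging the names of $x$ and $y$ and using $d(v,u)=d(u,v)$ rewrites this last inequality as $f_i(x)-f_i(y)+\gamma d(u,v)\leq d(x,u)+d(y,v)$, the $f_i$-half of the maximum. Taking the maximum of the two left-hand sides recovers the single inequality displayed in the proposition; conversely, splitting that maximum into its two cases recovers precisely the hypotheses needed to invoke the reverse direction of Lemma~\ref{lem: A cup (u,v) delta-CM} for $A_i\cup\{(u,v)\}$ (via $g_i$) and for $A_i\cup\{(v,u)\}$ (via $f_i$). Since the bounds $\mu_i(A_i)\geq\gamma$ and the shared points $u,v$ carry over unchanged, this establishes the equivalence for each $i$ and hence the proposition.

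I do not expect a genuine obstacle: the argument is a direct transcription through Lemma~\ref{lem: A cup (u,v) delta-CM}, and the only point demanding care is the bookkeeping of which function certifies which one-sided set. The reflection $(u,v)\leftrightarrow(v,u)$ must be matched with the variable swap $x\leftrightarrow y$ so that $f_i$ controls $A_i\cup\{(v,u)\}$ while $g_i$ controls $A_i\cup\{(u,v)\}$; getting this pairing backwards would produce the wrong inequality, so I would state explicitly at the outset which reflected set each of $f_i,g_i$ is responsible for.
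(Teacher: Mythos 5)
Your proposal is correct and is exactly the paper's argument: the paper derives this proposition in one line by combining Lemma~\ref{lem: A cup (u,v) delta-CM} with Proposition~\ref{prop: SD2P delta-CM characterisation}, and you have simply written out that combination, including the necessary $x\leftrightarrow y$ relabelling and $d(v,u)=d(u,v)$ step that matches $f_i$ to $A_i\cup\{(v,u)\}$ and $g_i$ to $A_i\cup\{(u,v)\}$. Your careful handling of that pairing is the only nontrivial bookkeeping, and you got it right.
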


\begin{remark}
In \cite{MR4849357}, it was shown that if $M$ has a property called the FLTP, then $\Lip_0(M)$ has the SD$2$P. It remains unknown whether the FLTP is equivalent to the condition described in Proposition~\ref{prop: SD2P characterisation}.
\end{remark}

\section{\texorpdfstring{Characterisations of $w^*$-D$2$P and $w^*$-LD$2$P for spaces of Lipschitz functions}{Metric characterisation of w*-D2P and w*-LD2P for spaces of Lipschitz functions}}

Recall that the space $\Lip_0(M)$ has the $w^*$-SD$2$P if and only if the space $M$ has the LTP \cite[Theorem~3.1]{MR3803112}, and that the space $\Lip_0(M)$ has the $w^*$-SSD$2$P if and only if the space $M$ has the SLTP \cite[Theorem~2.1]{MR4026495}. Inspired by these results, we characterise the $w^*$-D$2$P and the $w^*$-LD$2$P for the spaces of Lipschitz functions.

\begin{definition}
    We say that $M$ has the \emph{Lip-LTP} if, given a finite subset $N$ of $M$, $\varepsilon>0$, and $f\in B_{\Lip_0(M)}$, there exist $u,v\in M$ with $u\neq v$ satisfying for all $x,y\in N$,
    \begin{equation}\label{ineq: Lip-LTP}
         (1-\varepsilon)\big(|f(x)-f(y)|+d(u,v)\big)\leq d(x,u)+d(y,v).
    \end{equation}
\end{definition}
\noindent
Clearly the Lip-LTP follows from the LTP.

\begin{proposition}\label{prop: w^*-D2P characterisation}
    The space $\Lip_0(M)$ has the $w^*$-D$2$P if and only if $M$ has the Lip-LTP.
\end{proposition}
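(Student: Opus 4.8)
The plan is to read the $w^*$-topology of $B_{\Lip_0(M)}$ off its predual and then translate the diameter condition into the metric inequality of the Lip-LTP by means of McShane extensions. Since $\mathcal F(M)$ is the closed linear span of $\{\delta_x\colon x\in M\}$ (equivalently of the molecules), on the bounded set $B_{\Lip_0(M)}$ the $w^*$-topology is that of pointwise convergence; hence every nonempty relatively $w^*$-open $W\subseteq B_{\Lip_0(M)}$ contains a basic set
\[
W_{N,\delta,f}=\bigl\{h\in B_{\Lip_0(M)}\colon |h(x)-f(x)|<\delta \text{ for all } x\in N\bigr\},
\]
determined by a finite $N\subseteq M$ with $0\in N$, a radius $\delta>0$, and a centre $f\in W$. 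I will repeatedly use that $\|g-h\|=\sup_{(a,b)\in\widetilde M}(g-h)(m_{a,b})$, so producing two points of $W$ at distance close to $2$ amounts to finding a pair $(u,v)$ with $g(m_{u,v})$ and $h(m_{v,u})$ both close to $1$.

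For the implication ($\Rightarrow$), given $N\ni 0$, $\varepsilon>0$ and $f\in B_{\Lip_0(M)}$, I would apply the $w^*$-D$2$P to $W_{N,\delta,f}$ to obtain $g,h\in W_{N,\delta,f}$ with $\|g-h\|>2-\eta$, realised at a pair $(u,v)$, so that $g(m_{u,v}),h(m_{v,u})>1-\eta$. Bounding $g(u)$ from above and $g(v)$ from below through $1$-Lipschitzness together with $|g(x)-f(x)|,|h(x)-f(x)|<\delta$, and using $g,h$ in the two opposite orientations, yields for all $x,y\in N$ the estimate $d(x,u)+d(y,v)>(1-\eta)d(u,v)+|f(x)-f(y)|-2\delta$. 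Letting $\delta,\eta\to0$ along a sequence produces pairs $(u_k,v_k)$; if $\inf_k d(u_k,v_k)>0$ then a single large $k$ already converts the estimate into the Lip-LTP inequality, whereas if $d(u_k,v_k)\to0$ I would instead invoke the triangle estimate $d(x,u_k)+d(y,v_k)\ge d(x,y)-d(u_k,v_k)\ge|f(x)-f(y)|-d(u_k,v_k)$, which for each of the finitely many pairs $x,y\in N$ gives the required inequality once $k$ is large. Finiteness of $N$ then lets me pick one admissible $k$, and this direction is essentially complete.

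For ($\Leftarrow$) I would fix $W=W_{N,\delta,f}$ and $\eta>0$, apply the Lip-LTP to $N$, $f$ and a small $\varepsilon>0$ to obtain $u\neq v$ with $(1-\varepsilon)(|f(x)-f(y)|+d(u,v))\le d(x,u)+d(y,v)$ for all $x,y\in N$, and then define $g$ to be a $1$-Lipschitz extension of $f|_N$ pushed up at $u$ and down at $v$ as far as possible and $h$ the mirror extension. Both lie in $W$, and one computes
\[
g(u)-g(v)=\min\Bigl\{d(u,v),\ \min_{x,y\in N}\bigl(f(x)-f(y)+d(x,u)+d(y,v)\bigr)\Bigr\}.
\]
Evaluating the Lip-LTP inequality at the minimising pair gives $g(m_{u,v})\ge 1-\varepsilon-\varepsilon\,\theta/d(u,v)$, and symmetrically $h(m_{v,u})\ge 1-\varepsilon-\varepsilon\,\theta/d(u,v)$, where $\theta=\max_{x,y\in N}|f(x)-f(y)|$; hence $\|g-h\|\ge 2(1-\varepsilon)-2\varepsilon\,\theta/d(u,v)$.

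The hard part, and the whole substance of ($\Leftarrow$), is that this last bound only forces $\|g-h\|$ close to $2$ when $d(u,v)$ is not small relative to $\varepsilon\theta$: a one-dimensional example shows that a distant point of $N$ carrying a large value of $f$ can make the McShane spread $\min_{x,y}(f(x)-f(y)+d(x,u)+d(y,v))$ collapse far below $d(u,v)$, even for a pair $(u,v)$ satisfying the Lip-LTP inequality. The resolution is therefore to select a \emph{good} pair rather than an arbitrary one. When $M$ is unbounded this is automatic: taking $g,h$ to be the largest and smallest $1$-Lipschitz extensions of $f|_N$ and any $u$ with $d(u,N)$ large gives $(g-h)(u)\ge 2d(u,N)-\operatorname{diam}(N)$, so $\|g-h\|\ge 2-\operatorname{diam}(N)/d(u,N)\to2$. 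The remaining, genuinely delicate case is bounded $M$, where $d(u,v)$ and $\theta$ are both comparable to $\operatorname{diam}(M)$; here I expect to have to exploit the Lip-LTP inequality simultaneously over \emph{all} pairs in $N$ (not merely at the minimiser) to guarantee the existence of a pair whose extension spreads to within $\eta$ of the maximum, and then drive $\varepsilon\to0$. Making this scale balance quantitative — equivalently, showing that the Lip-LTP, though formulated with a single multiplicative slack, already supplies pairs realising the sharper spread needed above — is the step I anticipate will require the most care.
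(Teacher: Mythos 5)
Your ($\Rightarrow$) direction is essentially complete and correct: the pointwise basic neighbourhoods, the estimate $d(x,u)+d(y,v)\geq |f(x)-f(y)|-2\delta+(1-\eta)d(u,v)$ obtained from $1$-Lipschitzness, and the dichotomy on $\inf_k d(u_k,v_k)$ (with the triangle-inequality fallback when $d(u_k,v_k)\to 0$) do yield the Lip-LTP. This is organised a little differently from the paper, which instead disposes of all unbounded or non-uniformly-discrete $M$ at the very start and then runs the argument with molecule-defined neighbourhoods under the standing assumption $r<d(x,y)<R$; your version of this direction does not need that reduction.

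The genuine gap is in ($\Leftarrow$), precisely the step you flagged, but your diagnosis of it and your proposed repair are off. Your bound $\|g-h\|\geq 2(1-\varepsilon)-2\varepsilon\theta/d(u,v)$ already suffices whenever $M$ is bounded \emph{and uniformly discrete}: then $\theta/d(u,v)< R/r$ for fixed $0<r<R$, so taking $\varepsilon$ small relative to $r/R$ (the paper takes $\varepsilon=\frac{\delta r}{2R}$) closes that case --- boundedness by itself is not the enemy. The truly problematic case is $M$ not uniformly discrete, and there no strengthening of your single-pair McShane argument can work, because for such $M$ the Lip-LTP holds vacuously: given a finite $N$ and $\varepsilon>0$, any pair $u\neq v$ with $2d(u,v)\leq\varepsilon\min_{x\neq y\in N}d(x,y)$ satisfies inequality \eqref{ineq: Lip-LTP} by the triangle inequality alone, so the hypothesis carries no quantitative information that your construction could "exploit simultaneously over all pairs", while the conclusion ($w^*$-D$2$P) is still true for an external reason. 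The paper closes this case by quoting that every unbounded or non-uniformly-discrete $M$ has the LTP \cite[Theorem~2.4]{MR3826487} and that the LTP implies the $w^*$-SD$2$P, hence the $w^*$-D$2$P, of $\Lip_0(M)$ \cite[Theorem~3.1]{MR3803112}; only after this reduction does it run the extension argument you sketched. Without importing (or reproving) some such result, your ($\Leftarrow$) cannot be completed along the lines you propose.
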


\begin{proof}

If $M$ is unbounded or not uniformly discrete, then $M$ has the LTP and $\Lip_0(M)$ has the $w^*$-SD$2$P by \cite[Theorem~2.4]{MR3826487} and \cite[Theorem~3.1]{MR3803112}. Therefore, we assume throughout the proof that $M$ is bounded and uniformly discrete.  i.e. there exist $r,R>0$ such that for all $x,y\in M$ with $x\neq y$ we have $r<d(x,y)<R$.

Assume first that $\Lip_0(M)$ has the $w^*$-D$2$P. 
Let $N$ be a finite subset of $M$, let $\varepsilon>0$, and let $f\in B_{\Lip_0(M)}$. Let $\delta=\frac{\varepsilon r}{2R}$ and define 
    \[
    U = \big\{g\in B_{\Lip_0(M)} \colon \text{$|(f-g)(m_{x,y})|<\delta$ for all $x,y\in N$ with $x\neq y$}\big\}.
    \]
    Note that $U$ is not empty because $f\in U$. Since $\Lip_0(M)$ has the $w^*$-D$2$P, there exist $g,h\in U$ with $\|g-h\|> 2-\varepsilon/2$. Consequently, there exist $u,v\in M$ with $u\neq v$ such that
    \[
    g(m_{u,v})+h(m_{v,u})=(g-h)(m_{u,v})\geq 2-\varepsilon/2.
    \]
    Thus, $g(m_{u,v})\geq 1-\varepsilon/2$ and $h(m_{v,u})\geq 1-\varepsilon/2$. 
    
 Fix $x,y\in N$. If $f(x)-f(y)\geq 0$, then 
    \begin{align*}
        (1-\varepsilon)\big(|f(x)-f(y)|+d(u,v)\big) & \leq  f(x)-f(y)+ (1-\varepsilon)d(u,v) \\
        &\leq h(x)-h(y)+\delta d(x,y) +(1-\varepsilon)d(u,v) \\
        &\leq h(x)-h(y) +\frac{\varepsilon}{2} d(u,v)+(1-\varepsilon)d(u,v) \\
        &\leq h(x)-h(y)+h(v)-h(u)\\
        &\leq d(x,u)+d(y,v).
    \end{align*}
Similarly, if $f(x)-f(y)< 0$, then 
        \begin{align*}
        (1-\varepsilon)\big(|f(x)-f(y)|+d(u,v)\big) &\leq  f(y)-f(x)+ (1-\varepsilon)d(u,v) \\
        &\leq g(y)-g(x)+\delta d(x,y) +(1-\varepsilon)d(u,v) \\
        &\leq g(y)-g(x) +\frac{\varepsilon}{2} d(u,v)+(1-\varepsilon)d(u,v) \\
        &\leq g(y)-g(x)+g(u)-g(v)\\
        &\leq d(x,u)+d(y,v).
    \end{align*}

    Now assume that $M$ has the Lip-LTP. 
    Let $\mu_1,\ldots,\mu_n\in S_{\F(M)}$ be finitely supported and let $\alpha_1,\ldots,\alpha_n>0$. Assume that $U=\bigcap_{i=1}^n S(\mu_i,\alpha_i)$ is not empty and fix $f\in U$. Let $N=\{0\}\cup (\bigcup_{i=1}^n \supp \mu_i)$, let $\delta\in (0,1)$, and let $\varepsilon=\frac{\delta r}{2R}$. Since $M$ has the Lip-LTP, there exist $u,v\in M$ with $u\neq v$ such that the
    inequality (\ref{ineq: Lip-LTP}) holds for all $x,y\in N$. Define a function $g \in B_{\Lip_0(M)}$ by first setting $g|_N=f|_N$ and $g(u)=\min_{x\in N} (f(x)+d(x,u))$, and then
    \begin{align*}
            \quad g(y)=\max_{x\in N\cup \{u\}} \big(g(x)-d(x,y)\big) \qquad \text{for all $y\in M\setminus \big(N\cup \{u\}\big)$.}
    \end{align*}
    Note that $g(u)-g(v)\geq (1-\delta)d(u,v)$. Indeed, if $g(v)=g(u)-d(u,v)$, then this is clear; otherwise, there exist $x,y\in N$ such that
    \begin{align*}
     g(u)-g(v) &= f(x)+d(x,u)-f(y)+d(y,v) \\
     &\geq (1-\varepsilon)d(u,v) + \varepsilon \big(f(x)-f(y)\big)\\
     &\geq (1-\varepsilon)d(u,v) - \varepsilon d(x,y) \\
     &\geq \big(1-\frac{\delta r}{2R}\big)d(u,v) - \frac{\delta}{2} d(u,v) \geq (1-\delta)d(u,v).
    \end{align*}
    Similarly, define a function $h \in B_{\Lip_0(M)}$ by first setting $h|_N=f|_N$ and $h(v)=\min_{x\in N} (f(x)+d(x,v))$, and then
    \begin{align*}
            \quad h(y)=\max_{x\in N\cup \{v\}} \big(h(x)-d(x,y)\big) \qquad \text{for all $y\in M\setminus \big(N\cup \{v\}\big)$,}
    \end{align*}
    and verify that $h(v)-h(u)\geq (1-\delta)d(u,v)$. It remains to note that $g,h\in U$ and that 
    \[
    \|g-h\|\geq (g-h)(m_{u,v})\geq 2-2\delta.
    \]
\end{proof}

In Proposition~\ref{prop: LD2P delta-CM characterisation} and Proposition~\ref{prop: LD2P characterisation}, we gave two characterisations of the LD$2$P for the spaces of Lipschitz functions. We now give similar characterisations of the $w^*$-LD$2$P. Note that the $w^*$-LD$2$P and the LD$2$P are not equivalent properties for the spaces of Lipschitz functions by \cite[Example~3.1]{MR4849357}.

\begin{proposition}\label{prop: w^*-LD2P characterisation with CM}
    The space $\Lip_0(M)$ has the $w^*$-LD$2$P if and only if, given a finite cyclically monotonic subset $A$ of $\widetilde{M}$ and $\gamma\in(0,1)$, there exist $u,v\in M$ with $u\neq v$ such that $A\cup \{(u,v)\}$ and $A\cup \{(v,u)\}$ are $\gamma$-cyclically monotonic.
\end{proposition}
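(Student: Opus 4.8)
The plan is to characterize the $w^*$-LD$2$P via $w^*$-slices determined by finitely supported elements of $\F(M)$, and to translate the geometric diameter-two condition into the metric/cyclical-monotonicity language already developed in this section. The key point distinguishing this from the (norm) LD$2$P is that a $w^*$-slice of $B_{\Lip_0(M)}$ is determined by a functional $\mu\in S_{\F(M)}$, i.e. by a \emph{finitely supported} positive measure on $\widetilde M$; such a $\mu$ is automatically optimal, and the set $A$ on which a near-maximizing $f$ satisfies $f(m_{x,y})\geq 1-\alpha$ will be \emph{finite}. This finiteness is exactly what lets us assume $A$ is a finite cyclically monotonic set in the statement, rather than a $\gamma$-cyclically monotonic set of large measure as in Proposition~\ref{prop: LD2P delta-CM characterisation}.

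For the direction ($\Leftarrow$), I would start with a $w^*$-slice $S(\mu,\alpha)$, where $\mu\in S_{\F(M)}$ is finitely supported, and pick $f\in B_{\Lip_0(M)}$ nearly maximizing $\mu$. Writing $\mu$ via its molecule representation, the support of $\mu$ corresponds to a finite set of pairs in $\widetilde M$; letting $f$ be chosen so that $f(m_{x,y})$ is close to $1$ on the relevant pairs, the set $A=\{(x,y):f(m_{x,y})\geq \gamma\}$ restricted to $\supp\mu$ is a finite $\gamma$-cyclically monotonic set (Proposition~\ref{prop:delta_CM_characterisation}), and for $\gamma$ close enough to $1$ it is in fact cyclically monotonic after a small adjustment. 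Given the hypothesis, I obtain $u,v$ with $A\cup\{(u,v)\}$ and $A\cup\{(v,u)\}$ both $\gamma$-cyclically monotonic, hence (Proposition~\ref{prop:delta_CM_characterisation}) functions $f_1,f_2\in B_{\Lip_0(M)}$ with $f_1(m_{u,v})\geq\gamma$, $f_2(m_{v,u})\geq\gamma$, and both large on $A$. The computation $\|f_1-f_2\|\geq (f_1-f_2)(m_{u,v})\geq 2\gamma$ together with the estimate $\mu(f_i)\geq\gamma^2-(1-\gamma)$ (exactly as in Proposition~\ref{prop: LD2P delta-CM characterisation}) places both in the $w^*$-slice, yielding diameter $2$.

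For ($\Rightarrow$), given a finite cyclically monotonic $A=\{(x_i,y_i)\}$ and $\gamma\in(0,1)$, I would construct a finitely supported positive $\mu\in S_{\F(M)}$ supported on the molecules $m_{x_i,y_i}$, chosen so that its norm-one realization forces any $f$ in a sufficiently narrow $w^*$-slice to satisfy $f(m_{x_i,y_i})\geq 1-\alpha$ for every $i$; the cyclical monotonicity of $A$ guarantees such a $\mu$ is genuinely a unit functional with a maximizing Lipschitz function. Applying the $w^*$-LD$2$P to this slice produces $f,g$ with $\|f-g\|>2-\alpha$, hence a pair $(u,v)$ with $(f-g)(m_{u,v})$ near $2$, giving $f(m_{u,v})\geq 1-\alpha$, $g(m_{v,u})\geq 1-\alpha$ while $f,g$ remain large on $A$. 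Then $A\cup\{(u,v)\}\subseteq\{(x,y):f(m_{x,y})\geq 1-\alpha\}$ and $A\cup\{(v,u)\}\subseteq\{(x,y):g(m_{x,y})\geq 1-\alpha\}$ are subsets of $\gamma$-cyclically monotonic sets (Proposition~\ref{prop:delta_CM_characterisation}, since $\gamma<1-\alpha$), hence $\gamma$-cyclically monotonic themselves.

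The main obstacle I anticipate is the bookkeeping that ties a finite cyclically monotonic set $A$ to an honest finitely supported \emph{optimal} unit functional $\mu\in S_{\F(M)}$ whose narrow $w^*$-slices force near-maximization on all of $A$ simultaneously. Cyclical monotonicity of $A$ gives (by Proposition~\ref{prop:delta_CM_characterisation} with $\gamma=1$) a single $f$ attaining $f(m_{x,y})\geq 1$ on $A$, which is the certificate that the corresponding convex combination of molecules has norm $1$ and is exposed by $f$; the care lies in choosing the coefficients and the slice width so that every pair in $A$ is pinned down, and in confirming that finitely supported functionals suffice to detect all $w^*$-slices. Once this correspondence between finite cyclically monotonic sets and finitely supported unit functionals is set up cleanly, both implications run parallel to the proof of Proposition~\ref{prop: LD2P delta-CM characterisation}.
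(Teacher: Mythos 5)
Your proof of the forward implication (from the $w^*$-LD$2$P to the existence of $u,v$) is essentially the paper's: form $\mu=\frac{1}{n}\sum_{i=1}^n m_{x_i,y_i}$, use the cyclic monotonicity of $A$ together with Proposition~\ref{prop:delta_CM_characterisation} to certify $\|\mu\|=1$, take a $w^*$-slice narrow enough (width of order $\frac{1-\gamma}{n}$) that the averaging argument pins every $f(m_{x_i,y_i})$ down to at least $\gamma$, and read off $(u,v)$ from two far-apart elements of that slice. No issues there, and the reduction to finitely supported defining functionals that you flag but do not prove is the same standard density argument that the paper also leaves implicit.

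The gap is in the reverse implication, at the step where you must produce a finite \emph{cyclically monotonic} set from a finitely supported $\mu\in S_{\F(M)}$. You take a nearly maximizing $f$, note that the pairs of $\supp\mu$ on which $f(m_{x,y})\geq\gamma$ form a finite $\gamma$-cyclically monotonic set, and assert that for $\gamma$ close enough to $1$ this set ``is in fact cyclically monotonic after a small adjustment.'' That claim is false in general, and the hypothesis of the proposition applies only to genuinely ($1$-)cyclically monotonic sets. Concretely, take four points with $d(x_1,y_1)=d(x_2,y_2)=N$, $d(x_1,y_2)=d(x_2,y_1)=N-1$, and $d(x_1,x_2)=d(y_1,y_2)=2$: the set $\{(x_1,y_1),(x_2,y_2)\}$ is $(1-\frac{1}{N})$-cyclically monotonic (witnessed, via Proposition~\ref{prop:delta_CM_characterisation}, by $f(y_1)=f(y_2)=0$, $f(x_1)=f(x_2)=N-1$), but it is not cyclically monotonic, and no deletion of a ``small'' part repairs it; since $N$ is arbitrary, no threshold $\gamma<1$ works uniformly over metric spaces. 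The repair, which is what the paper does, is to avoid approximation entirely: $B_{\Lip_0(M)}$ is $w^*$-compact, so $\mu$ \emph{attains} its norm at some $f\in B_{\Lip_0(M)}$; writing $\mu=\sum_{i=1}^n\lambda_i m_{x_i,y_i}$ with $\lambda_i>0$ and $\sum_{i=1}^n\lambda_i=1$ (possible for finitely supported $\mu$ by \cite[Proposition~3.16]{MR3792558}), the identity $\sum_{i=1}^n\lambda_i f(m_{x_i,y_i})=1$ forces $f(m_{x_i,y_i})=1$ for \emph{every} $i$, so the full set of decomposition pairs is cyclically monotonic. This also means no pairs are discarded, so your slice-membership estimate simplifies from $\gamma^2-(1-\gamma)$ to $f_1(\mu)\geq\gamma>1-\alpha$. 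With that substitution your argument closes; as written, it does not.
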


\begin{proof} ($\Rightarrow$)
Assume that $\Lip_0(M)$ has the $w^*$-LD$2$P. Let $A = \{(x_i,y_i)\colon i=1,\dotsc,n\}$ be a finite cyclically monotonic subset of $\widetilde{M}$ and let $\gamma\in(0,1)$. Set $\alpha=\frac{1-\gamma}{n}$ and $\mu=\sum_{i=1}^n \frac{1}{n} m_{x_i,y_i}$. By Proposition~\ref{prop:delta_CM_characterisation}, we have $\mu\in S_{\mathcal{F}(M)}$. Since $\Lip_0(M)$ has the $w^*$-LD$2$P, there exist $f,g\in S(\mu, \alpha)$ satisfying  $\|f-g\|>  1+\gamma$.
Therefore, there exist $u,v\in M$ with $u\neq v$ such that $(f-g)(m_{u,v})\geq 1+\gamma$. Thus, $f(m_{u,v})\geq \gamma$ and $g(m_{v,u})\geq \gamma$. By Proposition~\ref{prop:delta_CM_characterisation}, $A\cup \{(u,v)\}$ is $\gamma$-cyclically monotonic because from $f\in S(\mu,\alpha)$ we get for any $i\in \{1,\dotsc,n\}$,
\begin{align*}
    f(m_{x_i,y_i})&= n f(\mu)- \sum_{\substack{j = 1\\j\neq i}}^n f(m_{x_j,y_j}) \geq n(1-\alpha)-(n-1) = 1-n\alpha= \gamma.
\end{align*}
Analogously, we get that $A\cup \{(v,u)\}$ is $\gamma$-cyclically monotonic because $g(m_{x_i,y_i})\geq \gamma$ for all $i\in \{1,\dotsc,n\}$.

($\Leftarrow$) 
Let $\mu\in S_{\F(M)}$ be finitely supported, let $0<\alpha\leq 1$, and let $\gamma\in(1-\alpha,1)$. It suffices to show that there exist $f,g\in S(\mu,\alpha)$ with $\|f-g\|\geq 2\gamma$. By \cite[Proposition~3.16]{MR3792558}, there exist $n\in \mathbb{N}$, $(x_1,y_1),\dotsc,(x_n,y_n)\in\widetilde M$, and $\lambda_1,\dotsc,\lambda_n>0$ with $\sum_{i=1}^n \lambda_i =1$ such that $\mu=\sum_{i=1}^n \lambda_i m_{x_i,y_i}$. By Proposition~\ref{prop:delta_CM_characterisation}, 
$A = \{(x_i,y_i)\colon  i=1,\dotsc,n\}$ is cyclically monotonic. Thus, there exist $u,v\in M$ with $u\neq v$ such that $A\cup \{(u,v)\}$ and $A\cup \{(v,u)\}$ are $\gamma$-cyclically monotonic. By Proposition~\ref{prop:delta_CM_characterisation}, there exist $f,g\in B_{\Lip_0(M)}$ satisfying $f(m_{x,y})\geq \gamma$ and $g(m_{x,y})\geq \gamma$ for all $(x,y)\in A$, and $f(m_{u,v})\geq \gamma$ and $g(m_{v,u})\geq \gamma$. Therefore, we have $f(\mu)\geq \gamma$ and $g(\mu)\geq \gamma$, which implies $f,g\in S(\mu, \alpha)$, and
\[
\|f-g\|\geq (f-g)(m_{u,v})\geq 2\gamma.
\]
\end{proof}

From Lemma~\ref{lem: A cup (u,v) delta-CM} and Proposition~\ref{prop: w^*-LD2P characterisation with CM}, we immediately get another characterisation of the $w^*$-LD$2$P for the spaces of Lipschitz functions. 
\begin{definition}
    We say that $M$ has the \emph{$2$-Lip-LTP} if, given a finite cyclically monotonic subset $A$ of $\widetilde{M}$ and $\varepsilon>0$, there exist $f,g\in B_{\Lip_0(M)}$ and $u,v\in M$ with $u\neq v$ satisfying for all $(x,y)\in A$, $f(m_{x,y})\geq 1-\varepsilon$ and $g(m_{x,y})\geq 1-\varepsilon$, and for all $x,y\in \pi(A)$,
    \[
        \max\big\{f(x)-f(y),g(y)-g(x)\big\}+(1-\varepsilon) d(u,v)\leq d(x,u)+d(y,v).
    \]
\end{definition}

\begin{proposition}\label{prop: w^*-LD2P characterisation 2-Lip-LTP}
   The space $\Lip_0(M)$ has the $w^*$-LD$2$P if and only if $M$ has the $2$-Lip-LTP.
\end{proposition}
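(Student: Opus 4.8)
The plan is to deduce the statement directly, by feeding the two $\gamma$-cyclical-monotonicity conditions appearing in Proposition~\ref{prop: w^*-LD2P characterisation with CM} through the translation provided by Lemma~\ref{lem: A cup (u,v) delta-CM}, in exactly the same way that Proposition~\ref{prop: LD2P characterisation} was obtained from Proposition~\ref{prop: LD2P delta-CM characterisation}. By Proposition~\ref{prop: w^*-LD2P characterisation with CM}, the $w^*$-LD$2$P is equivalent to the assertion that for every finite cyclically monotonic $A\subseteq\widetilde M$ and every $\gamma\in(0,1)$ there exist $u,v\in M$ with $u\neq v$ such that both $A\cup\{(u,v)\}$ and $A\cup\{(v,u)\}$ are $\gamma$-cyclically monotonic. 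Writing $\gamma=1-\varepsilon$, I would show that this is a verbatim reformulation of the $2$-Lip-LTP, once the cyclical monotonicity of the two augmented sets is expressed through functions in $B_{\Lip_0(M)}$.

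Concretely, I would fix $A$ and $\varepsilon$ (equivalently $\gamma=1-\varepsilon$) together with the pair $u,v$ supplied by Proposition~\ref{prop: w^*-LD2P characterisation with CM}. Applying Lemma~\ref{lem: A cup (u,v) delta-CM} to $A\cup\{(u,v)\}$ produces $g\in B_{\Lip_0(M)}$ with $g(m_{x,y})\geq 1-\varepsilon$ on $A$ and
\[
g(y)-g(x)+(1-\varepsilon)d(u,v)\leq d(x,u)+d(y,v)\qquad\text{for all }x,y\in\pi(A),
\]
which is precisely the $g$-term of the $\max$ in the definition of the $2$-Lip-LTP. Applying the same lemma to $A\cup\{(v,u)\}$ (that is, with the roles of $u$ and $v$ interchanged) yields $f\in B_{\Lip_0(M)}$ with $f(m_{x,y})\geq 1-\varepsilon$ on $A$ and $f(y)-f(x)+(1-\varepsilon)d(v,u)\leq d(x,v)+d(y,u)$ for all $x,y\in\pi(A)$. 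Since this holds for every ordered pair in $\pi(A)$, I would interchange $x$ and $y$ and use $d(u,v)=d(v,u)$ to rewrite it as
\[
f(x)-f(y)+(1-\varepsilon)d(u,v)\leq d(x,u)+d(y,v),
\]
which is the $f$-term of the $\max$. Combining the two inequalities gives exactly the displayed condition in the definition of the $2$-Lip-LTP, and reading the argument backwards recovers the two cyclical-monotonicity statements, so both implications are established simultaneously.

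The only point requiring care, and the step I expect to be the main (and minor) obstacle, is the bookkeeping of the variable interchange and of the reflected pair $(v,u)$: one must check that the inequality obtained for $A\cup\{(v,u)\}$ indeed turns into the $f$-term with right-hand side $d(x,u)+d(y,v)$ rather than $d(x,v)+d(y,u)$. This is guaranteed by the symmetry of $d$ together with the fact that Lemma~\ref{lem: A cup (u,v) delta-CM} quantifies over \emph{all} $x,y\in\pi(A)$, which legitimises the swap. Everything else is a direct transcription, which is why the result follows immediately from Lemma~\ref{lem: A cup (u,v) delta-CM} and Proposition~\ref{prop: w^*-LD2P characterisation with CM}.
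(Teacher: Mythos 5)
Your proposal is correct and takes essentially the same route as the paper, which gives no separate proof at all: it states that the proposition follows immediately from Lemma~\ref{lem: A cup (u,v) delta-CM} and Proposition~\ref{prop: w^*-LD2P characterisation with CM}, exactly the two ingredients you use. Your write-up just makes explicit the one piece of bookkeeping the paper leaves implicit (applying the lemma to $A\cup\{(v,u)\}$ and interchanging $x$ and $y$ using the symmetry of $d$), and you handle it correctly.
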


\section{\texorpdfstring{Separating LD$2$P and D$2$P in spaces of Lipschitz functions}{Separating LD2P and D2P in spaces of Lipschitz functions}}

In this section, we give an example of a metric space $M$ for which the space $\Lip_0(M)$ has the LD$2$P but not the D$2$P or even the $w^*$-D$2$P. To our knowledge, the only known examples of Banach spaces with the LD$2$P but without the D$2$P have appeared in \cite{MR4477423, MR3334951, MR4735972}. 

In \cite{MR3334951}, it was shown that every Banach space containing an isomorphic copy of $c_0$ admits a renorming with the LD$2$P but without the D$2$P. Similar ideas were recently applied in \cite{MR4735972} to special renormings of $L_\infty[0,1]$. It remains unclear whether any of these renormings is a dual Banach space. 

In \cite{MR4477423}, the authors constructed a Banach space with the LD$2$P but without the D$2$P, using a binary tree with the root removed. This space is not a dual space as it is separable and contains (an isomorphic copy of) $c_0$.

\begin{lemma}\label{lemma: natural number values function}
    Let $M$ be a pointed metric space with $d(x,y)\in \mathbb{N}\cup \{0\}$ for all $x,y\in M$, and let $A$ be a cyclically monotonic subset of $\widetilde{M}$. Then there exists $f\in B_{\Lip_0(M)}$ such that for all $x\in M$, $f(x)\in \mathbb{Z}$, and for all $(x,y)\in A$, $f(m_{x,y})=1$. 
\end{lemma}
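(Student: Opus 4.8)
The plan is to establish the integrality refinement of Proposition~\ref{prop:delta_CM_characterisation} in the special case where the metric takes integer values. The starting point is that $A$ is cyclically monotonic, i.e. $1$-cyclically monotonic. By Proposition~\ref{prop:delta_CM_characterisation} with $\gamma=1$, there already exists some $f_0\in B_{\Lip_0(M)}$ with $f_0(m_{x,y})\geq 1$, hence $f_0(m_{x,y})=1$, for all $(x,y)\in A$. The point of the present lemma is to produce such an $f$ that additionally takes \emph{integer} values on all of $M$. So the real task is integrality, and the natural idea is to re-run the McShane-extension construction from the proof of Proposition~\ref{prop:delta_CM_characterisation} while arranging the auxiliary constants $\alpha_i$ to be integers.

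**First I would** revisit that construction. Writing $A=\{(x_i,y_i)\colon i\in I\}$ and setting $\beta_{ij}=d(x_i,y_j)-d(x_i,y_i)$ (the $\gamma=1$ version of the $\beta_{ij}$ there), cyclical monotonicity gives, via \cite[Lemma~2.2]{MR4095811}, real numbers $\alpha_i$ with $\alpha_i\leq\alpha_j+\beta_{ij}$ for all $i,j$. Since $d$ is integer-valued, every $\beta_{ij}\in\mathbb{Z}$, so the round-down $\alpha_i':=\lfloor\alpha_i\rfloor$ still satisfies $\alpha_i'\leq\alpha_j'+\beta_{ij}$: indeed $\alpha_i'\leq\alpha_i\leq\alpha_j+\beta_{ij}$, and since the right-most inequality's floor is $\lfloor\alpha_j\rfloor+\beta_{ij}=\alpha_j'+\beta_{ij}\in\mathbb Z$ while $\alpha_i'\in\mathbb Z$, we may replace $\alpha_i$ by the integers $\alpha_i'$ throughout. **Then I would** define
\[
f(x)=\min_{i\in I}\bigl(\alpha_i'+d(x,y_i)\bigr),
\]
(using a minimum rather than infimum, which is legitimate at least when $A$ is finite or the values are bounded below, and in general the infimum of integers that is attained because $d\geq 0$ forces the infimum to be $\min_i\alpha_i'$-bounded). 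Because each $\alpha_i'\in\mathbb{Z}$ and $d(x,y_i)\in\mathbb{N}\cup\{0\}$, every value $f(x)$ is an integer. The $1$-Lipschitz property and the inequality $f(x_i)\geq f(y_i)+d(x_i,y_i)$, giving $f(m_{x_i,y_i})=1$, follow exactly as in the proof of Proposition~\ref{prop:delta_CM_characterisation}. A routine normalisation subtracts $f(0)\in\mathbb{Z}$ so that $f(0)=0$, preserving both integrality and the molecule condition.

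**The main obstacle** I expect is the attainment/boundedness issue when $I$ is infinite: the infimum in the definition of $f$ must actually be a genuine minimum (or at least land in $\mathbb{Z}$) for every $x$, and one must check $f(x)>-\infty$. This is handled by noting that $\alpha_i'+d(x,y_i)\geq\alpha_i'\geq \alpha_k'+\beta_{ki}=\alpha_k'+d(x_k,y_i)-d(x_k,y_k)$ for any fixed $k$, which bounds the quantity below by a fixed constant (using that $d$ is bounded below by $0$ and, in the relevant application of this lemma, by boundedness of $M$); since the floor values are integers bounded below, the infimum is attained and integer-valued. Once this is settled, the verification that $f\in B_{\Lip_0(M)}$ with $f(m_{x,y})=1$ on $A$ is a direct transcription of the earlier argument, so the only genuinely new content is the integrality bookkeeping enabled by $d(x,y)\in\mathbb{N}\cup\{0\}$.
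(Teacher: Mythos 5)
Your proposal is correct in substance but takes a genuinely different route from the paper. The paper's proof is a one-line post-processing: it takes the function $g\in B_{\Lip_0(M)}$ provided by Proposition~\ref{prop:delta_CM_characterisation} (with $\gamma=1$) and simply sets $f=\lfloor g\rfloor$; since $d(x,y)\in\mathbb{Z}$, one has $\lfloor g(y)+d(x,y)\rfloor=\lfloor g(y)\rfloor+d(x,y)$, which yields both the $1$-Lipschitz bound and $f(m_{x,y})=1$ on $A$ in one stroke, with no need to reopen the McShane construction. You instead floor the auxiliary constants $\alpha_i$ inside the proof of Proposition~\ref{prop:delta_CM_characterisation} and rerun the extension. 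This is a legitimate alternative (and it makes transparent exactly where integrality enters, namely $\beta_{ij}\in\mathbb{Z}$), but it forces you to re-address well-definedness of the infimum, which is where your write-up has two blemishes.

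First, the inequality you use for the lower bound has a sign error: from $\alpha_k'\leq\alpha_i'+\beta_{ki}$ one gets $\alpha_i'\geq\alpha_k'-\beta_{ki}$, not $\alpha_i'\geq\alpha_k'+\beta_{ki}$. Second, by discarding the term $d(x,y_i)$ too early you are left with a bound that still depends on $i$ (through $d(x_k,y_i)$), which is why you end up invoking boundedness of $M$ --- an assumption not present in the statement of the lemma. The standard McShane estimate avoids both problems: since $\alpha_i'-\alpha_j'\leq\beta_{ij}\leq d(y_i,y_j)$, for any fixed $k$ and all $i$,
\[
\alpha_i'+d(x,y_i)\;\geq\;\alpha_k'-d(y_k,y_i)+d(x,y_i)\;\geq\;\alpha_k'-d(y_k,x),
\]
a bound independent of $i$. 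Hence the values $\alpha_i'+d(x,y_i)$ form a nonempty set of integers bounded below, so the infimum is attained and integer-valued, and the rest of your argument (the Lipschitz bound, the molecule condition on $A$, and subtracting $f(0)\in\mathbb{Z}$ to land in $\Lip_0(M)$) goes through for arbitrary, possibly unbounded, $M$. With this correction your proof is complete; the paper's version is simply shorter because flooring the finished function sidesteps these well-definedness issues entirely.
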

\begin{proof}

    By Proposition~\ref{prop:delta_CM_characterisation}, there exists $g\in B_{\Lip_0(M)}$ satisfying $g(m_{x,y})=1$ for all $(x,y)\in A$. Define a function $f\colon M\to \mathbb{R}$ by $f(x)=\lfloor g(x)\rfloor$, the floor of $g(x)$. Note that $f \in B_{\Lip_0(M)}$ because for all $x,y\in M$,
    \begin{align*}
    f(x)-f(y)&=\lfloor g(x)\rfloor - \lfloor g(y)\rfloor \leq \lfloor g(y)+d(x,y)\rfloor - \lfloor g(y)\rfloor \\
    &= \lfloor g(y)\rfloor+d(x,y) - \lfloor g(y)\rfloor =d(x,y).
    \end{align*}
    Furthermore, note that if $(x,y)\in A$, then
    \begin{align*}
    f(x)-f(y) &= \lfloor g(x)\rfloor - \lfloor g(y)\rfloor = \lfloor g(y)+d(x,y)\rfloor - \lfloor g(y)\rfloor \\
    &= \lfloor g(y)\rfloor+d(x,y) - \lfloor g(y)\rfloor=d(x,y).        
    \end{align*}
\end{proof}

\begin{example}\label{Ex:new}
Let $M = \{x_i,y_i,u_i^j,v_i^j\colon i\in\{1,2,3\},j\in\N\}$ be a metric space where
\[
    d(y_1,x_2)=d(y_2,x_3)=d(y_3,x_1)=1,
\]
for all $i\in\{1,2,3\}$ and $j\in \N$,
\[
    d(x_i,u_i^j)=d(u_i^j,v_i^j)=d(v_i^j,y_i)=1,
\]
and the distance between two different elements is equal to $2$ in all other cases (see Figure~\ref{Figure_1}). Then the space $\Lip_0(M)$ lacks the $w^*$-D$2$P but has the LD$2$P.

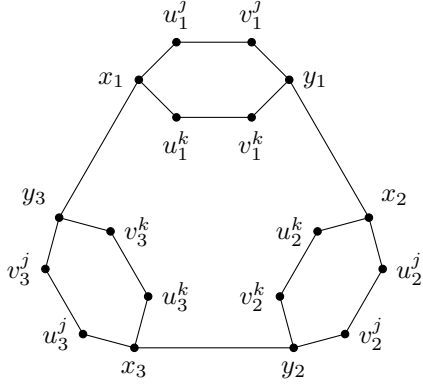
\begin{figure}[ht]
\begin{tikzpicture}
   \newdimen\R 
   \newdimen\r
   \R=1.8cm 
   \r=0.5cm 
   \coordinate (c) at (0,0);
   \foreach \a in {0,120,240}
   {
   \draw[black, rotate around={{\a+90}:(c)}] 
   (\R,2*\r) -- (-{\R/2+\r*sqrt(3)},{\R*sqrt(3)/2+\r});
   }
    
    \foreach \a/\uj/\x/\uk/\vk/\y/\vj/\puj/\px/\puk/\pvk/\py/\pvj in
    { 
        0/{$u_1^j$}/{$x_1$}/{$u_1^k$}/{$v_1^k$}/{$y_1$}/{$v_1^j$}/above/left/below/below/right/above,
        120/{$u_3^j$}/{$x_3$}/{$u_3^k$}/{$v_3^k$}/{$y_3$}/{$v_3^j$}/left/below/right/right/above left/left,
        240/{$u_2^j$}/{$x_2$}/{$u_2^k$}/{$v_2^k$}/{$y_2$}/{$v_2^j$}/right/above right/left/left/below/right
    }
    {
    \draw[black, rotate around={\a+90:(c)}] 
    (\R+\r, \r) node[inner sep=1pt, circle, draw, fill, label={\puj:\uj}] {}
    -- (\R, 2*\r) node[inner sep=1pt, circle, draw, fill, label={\px:\x}] {}
    -- (\R-\r, \r) node[inner sep=1pt, circle, draw, fill, label={\puk:\uk}] {}
    -- (\R-\r, -\r) node[inner sep=1pt, circle, draw, fill, label={\pvk:\vk}] {}
    -- (\R, -2*\r) node[inner sep=1pt, circle, draw, fill, label={\py:\y}] {}
    -- (\R+\r, -\r) node[inner sep=1pt, circle, draw, fill, label={\pvj:\vj}] {}
    -- cycle;
    }
\end{tikzpicture}

\caption{A representation of the metric space M in Example~\ref{Ex:new}. The
distances between points connected by a straight line segment are 1, the
distances between other different points are 2.
}\label{Figure_1}
\end{figure}

\end{example}

\begin{proof}
We start by showing that the space $\Lip_0(M)$ does not have the $w^*$-D$2$P. By Proposition~\ref{prop: w^*-D2P characterisation}, it suffices to show that the space $M$ lacks the Lip-LTP. Without loss of generality, we assume that $0=x_1$. Let $N=\{x_1,x_2,x_3, y_1,y_2,y_3\}$ and let $\varepsilon=1/14$. Define $f\in S_{\Lip_0(M)}$ by
\[
f(x)=\begin{cases}
    0,& \text{if $x\in \{x_1,y_3\}$}; \\
    1/2, & \text{if $x=y_2$};\\
    3/2, \quad &\text{if $x\in \{y_1,x_3\}$};\\
    2, &\text{if $x=x_2$};\\
    1, &\text{otherwise}.
\end{cases}
\]
Fix $u,v\in M$ with $u\neq v$. We show that there exist $x,y\in N$ such that the inequality~\eqref{ineq: Lip-LTP} does not hold. First, assume that $u\in N$ or $v\in N$. Without loss of generality, let $u\in N$. Let $x=u$ and let $y\in N$ be such that $|f(x)-f(y)|=3/2$. Then 
\[
    (1-\varepsilon)\big(|f(x)-f(y)|+d(u,v)\big)\geq \frac{13}{14}\cdot \frac{5}{2}> 2 \geq d(x,u)+d(y,v).
\]
Now assume that $u,v\in M\setminus N$. If $d(u,v)=2$, then there exist $x,y\in N$ satisfying $d(x,u)=1$ and $|f(x)-f(y)|=3/2$. Then
\[
    (1-\varepsilon)\big(|f(x)-f(y)|+d(u,v)\big)=\frac{13}{14}\cdot \frac{7}{2}>3\geq d(x,u)+d(y,v).
\]
If $d(u,v)=1$, then there exist $i\in\{1,2,3\}$ and $j\in \N$ such that $u,v\in \{u_i^j,v_i^j\}$. Therefore, there exist $x,y\in \{x_i,y_i\}$ satisfying $d(x,u)=1$, $d(y,v)=1$, and $|f(x)-f(y)|=3/2$. Thus,
\[
    (1-\varepsilon)\big(|f(x)-f(y)|+d(u,v)\big)=\frac{13}{14}\cdot \frac{5}{2}>2=d(x,u)+d(y,v).
\]
Consequently, $\Lip_0(M)$ does not have the $w^*$-D$2$P.

Next, we show that the space $\Lip_0(M)$ has the LD2P. We make use of Proposition~\ref{prop: LD2P characterisation}. Let $\mu\in S_\ba$ be optimal and let $\gamma\in (0,1)$. By Proposition~\ref{prop: optimal mu Z}, there exists a cyclically monotonic subset $B$ of $\widetilde{M}$ with $\mu(B)\geq \frac{1+\gamma}{2}$. Choose $j\in \mathbb{N}$ such that $C=\pi^{-1}(\{u_1^j,u_2^j,u_3^j,v_1^j,v_2^j,v_3^j\})$ satisfies $\mu(C)\leq \frac{1-\gamma}{2}$. Set $A=B\setminus C$ and note that $\mu(A)\geq \gamma$. 

By Lemma~\ref{lemma: natural number values function}, there exists $f\in B_{\Lip_0(M)}$ such that for all $x\in M$, $f(x)\in \mathbb{Z}$, and for all $(x,y)\in A$, $f(m_{x,y})=1$. Without loss of generality, we may and do assume that $0\in \pi(A)$ and that $f(0)=\min_{x\in \pi(A)}{f(x)}$. Then $f(x)\in \{0,1,2\}$ for all $x\in \pi(A)$. 

It remains to show that there exist $u,v\in M$ with $u\neq v$ satisfying for all $x,y\in \pi(A)$,
\begin{equation*}\label{eq: ex_LD2P_|f|}
|f(x)-f(y)|+d(u,v)\leq d(x,u)+d(y,v).
\end{equation*}
Indeed, if this is the case, then set $g=f$ and note that the subset $A$ of $\widetilde{M}$, functionals $f,g\in B_{\Lip_0(M)}$, and elements $u,v\in M$ satisfy the conditions of Proposition~\ref{prop: LD2P characterisation}.

If there exists $i \in \{1, 2, 3\}$ such that $ x_i \not\in \pi(A) $ or $ y_i \not\in \pi(A) $, then letting $ u = u_i^j $ and $ v = v_i^j $, it follows that for all $ x, y \in \pi(A) $, $ d(x, u) \geq 2 $ or $ d(y, v) \geq 2 $, and hence 
\[
|f(x) - f(y)| + d(u, v) \leq d(x, y) + 1 \leq 3 \leq d(x, u) + d(y, v).
\]

Now assume that $x_1,x_2,x_3,y_1,y_2,y_3\in \pi(A)$. Note that $|f(x_i)-f(y_i)|\leq 1$ for some $i\in \{1,2,3\}$. Indeed, suppose for example that $f(x_1)-f(y_1)=2$. Then $f(x_1)=2$ and $f(y_1)=0$, and therefore, $f(x_2)\leq 1$ and $f(y_3)\geq 1$. Thus, if $|f(x_2)-f(y_2)|>1$ and $|f(x_3)-f(y_3)|>1$, then $f(x_2)=0$, $f(y_2)=2$, $f(y_3)=2$, and $f(x_3)=0$. But then $\|f\|\geq  2$ because
\[
f(y_2)-f(x_3)=2=2d(y_2,x_3).
\]
Let $i\in \{1,2,3\}$ be such that $|f(x_i)-f(y_i)|\leq 1$. Let $u=u_i^j$ and $v=v_i^j$, and fix $x,y\in \pi(A)$. Note that $d(u,v)=1$, $d(x,u)\geq 1$, and $d(y,v)\geq 1$. If $x\not\in\{x_i,y_i\}$ or $y\not\in \{x_i,y_i\}$, then $d(x,u)\geq 2$ or $d(y,v)\geq 2$, and therefore,
\[
|f(x)-f(y)|+d(u,v)\leq d(x,y)+1\leq 3 \leq d(x,u)+d(y,v).
\]
If $x,y\in \{x_i,y_i\}$, then $|f(x)-f(y)|\leq 1$, and therefore,
\[
|f(x)-f(y)|+d(u,v)\leq 2\leq d(x,u)+d(y,v).
\]

\end{proof}

\section*{Acknowledgements}
This work was supported by the Estonian Research
Council grant (PRG1901).

The authors thank Trond A.~Abrahamsen and Vegard Lima for helpful discussions on the content of this paper.

\bibliographystyle{amsplain}
\bibliography{references}
\end{document}